\numberwithin{equation}{section}
\theoremstyle{plain}
\newtheorem{theorem}{Theorem}[section]
\newtheorem{lemma}[theorem]{Lemma}
\newtheorem{proposition}[theorem]{Proposition}
\theoremstyle{definition}
\newtheorem{definition}[theorem]{Definition}
\newtheorem{remark}[theorem]{Remark}
\def\beqn{\begin{equation}}
\def\beqn*{$$}
\def\eeqn{\end{equation}}
\newcommand{\bx}{{\bf x}}
\newcommand{\by}{{\bf y}}
\newcommand{\bi}{{\bf i}}
\newcommand{\bj}{{\bf j}}
\def\P{\mathbb{P}}
\def\E{\mathbb{E}}
\def\Pn{\mathcal P_n}
\newcommand{\reals}{{\mathbb R}}
\newcommand{\bbr}{\reals}
\newcommand{\bbn}{{\mathbb N}}
\newcommand{\X}{{\mathcal{X}}}
\newcommand{\Y}{{\mathcal{Y}}}
\newcommand{\I}{\mathcal I}
\newcommand{\one}{{\mathbbm 1}}
\newcommand{\remove}[1]{}
\newcommand{\C}{\check{C}}
\newcommand{\betakn}{\beta_{k,n}}
\newcommand{\Z}{\mathcal Z}
\newcommand{\hij}{h^{(i,j)}}
\newcommand{\DelL}{\Delta_L}
\newcommand{\gij}{g^{(i,j)}}
\newcommand{\genR}{R_{s,t,u,v}}
\newcommand{\genRi}{R_{s_i,t_i,u_i,v_i}}
\newcommand{\genH}{H_{s,t,u,v}}
\newcommand{\genHi}{H_{s_i,t_i,u_i,v_i}}
\newcommand{\A}{\mathcal A}
\newcommand{\M}{\mathcal M}
\newcommand{\seq}{n^{k+2}r_n^{d(k+1)}}
\newcommand{\Ik}{\mathcal I_{k+2}}
\newcommand{\supt}{\sup_{0 \le t \le T}}
\begin{document}

\bibliographystyle{abbrv}

\renewcommand{\baselinestretch}{1.05}

\title[Convergence of persistence diagram]
{Convergence of persistence diagram in the sparse regime}

\author{Takashi Owada}
\address{Department of Statistics\\
Purdue University \\
West Lafayette, 47907, USA}
\email{owada@purdue.edu}

\thanks{This research was partially supported by NSF grant DMS-1811428.}

\subjclass[2010]{Primary 60F05, 60F15. Secondary 55U10, 60G55.}
\keywords{Stochastic topology, persistent homology, persistence diagram, persistent Betti number \vspace{.5ex}}

\begin{abstract}
The objective of this paper is to examine the asymptotic behavior of persistence diagrams associated with \v{C}ech filtration. A persistence diagram is a graphical descriptor of a topological and algebraic structure of geometric objects. We consider \v{C}ech filtration over a scaled random sample $r_n^{-1}\mathcal X_n = \{ r_n^{-1}X_1,\dots, r_n^{-1}X_n \}$, such that $r_n\to 0$ as $n\to\infty$. We treat persistence diagrams as a point process and establish their limit theorems in the sparse regime: $nr_n^d\to0$, $n\to\infty$. In this setting, we show that the asymptotics of the $k$th persistence diagram depends on the limit value of the sequence $n^{k+2}r_n^{d(k+1)}$. If $n^{k+2}r_n^{d(k+1)} \to \infty$, the scaled persistence diagram converges to a deterministic Radon measure almost surely in the vague metric. If $r_n$ decays faster so that $n^{k+2}r_n^{d(k+1)} \to c\in (0,\infty)$, the persistence diagram weakly converges to a limiting point process without normalization. Finally, if $n^{k+2}r_n^{d(k+1)} \to 0$, the sequence of probability distributions of a persistence diagram should be normalized, and the resulting convergence will be treated in terms of the $\mathcal M_0$-topology.
\end{abstract}

\maketitle

\section{Introduction}

The main theme of this paper is persistent homology and its diagrams associated with random geometric complexes. In applied topology, persistent homology is one of the tools most ubiquitously used to analyze data in a way robust to various deformations. In recent times, persistent homology has demonstrated its applicability in areas as diverse as sensor networks \cite{desilva:ghrist:2007}, bioinformatics \cite{dabaghian:memoli:frank:carlsson:2012}, computational chemistry \cite{martin:thompson:coutsias:watson:2010}, 
linguistics \cite{port:gheorghita:guth:clark:liang:dasu:marcolli:2018}, astrophysics \cite{pranav:edelsbrunner:weygaert:vegter:kerber:jones:wintraecken:2017},  
cancer genomics \cite{arsuaga:borrman:cavalcante:gonzalez:park:2015},
and material science \cite{hiraoka:nakamura:hirata:escolar:matsueNishiura:2016}.  

First, we present one illustrative example, which helps capture the essence of persistent homology.
A more rigorous description of persistent homology can be found in \cite{edelsbrunner:letscher:zomorodian:2002}, \cite{zomorodian:carlsson:2005}, and \cite{edelsbrunner:harer:2010}, while \cite{adler:bobrowski:borman:subag:weinberger:2010} and \cite{ghrist:2008} provide good introductory reading. In Figure \ref{f:persistence}, given a data set $\X=\{ x_1,\dots,x_n \}$, we wish to estimate an underlying topology of an annulus from the union of balls, $U(t):=\bigcup_{i=1}^n B(x_i,t)$, where $B(x,\rho)$ denotes a closed ball of radius $\rho$ centered at $x\in\bbr^d$. If the radius $t$ is suitably selected, as in Figure \ref{f:persistence}(b), $U(t)$ is very close in shape to an annulus. However, if $t$ is small, as in Figure \ref{f:persistence}(a), $U(t)$ simply consists of many small components and fails to recover the topology of an annulus. Similarly, if $t$ is too big, as in Figure \ref{f:persistence}(c), $U(t)$ fails to detect a hole at the center of an annulus. This example indicates that selecting an appropriate radius is not easy. To overcome this issue, persistent homology is aimed to detect a robust and topological structure of a manifold by tracking the creation and destruction of topological cycles, such as the closed loops in Figure \ref{f:persistence}.

Figure \ref{f:diagram-plot} visualizes the outcome of persistent homology for the annulus example in a two-dimensional plot known as a \emph{persistence diagram}. In this two-dimensional plot, the $x$-axis represents the time (radius) at which each closed loop appears and the $y$-axis represents the time (radius) at which it is terminated (or ``filled in"). As we increase the radius $t$ in Figure \ref{f:persistence}, many closed loops appear and quickly disappear (see, e.g., the cycles $c_1$ and $c_2$). The birth time and death time of these cycles are so close that they are plotted near the diagonal line (see points $a_1$ and $a_2$ in Figure \ref{f:diagram-plot}). The points near the diagonal line are generated by non-robust cycles and thus viewed as ``topological noise." However, cycle $c_3$ is considerably more essential and robust to the change in the value of $t$. In Figure \ref{f:diagram-plot}, the corresponding birth-death plot is located far from the bulk of the other points (see point $a_3$).

\begin{figure}[!t]
\begin{center}
\includegraphics[width=11.4cm]{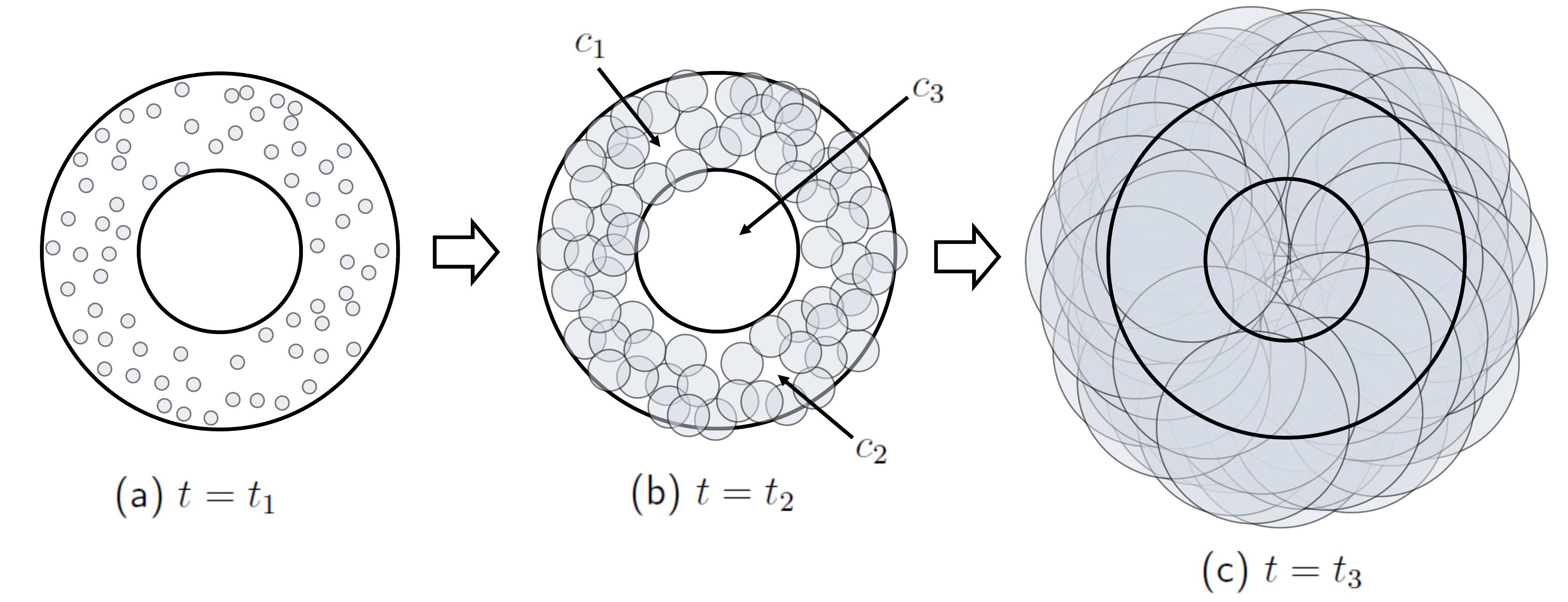}
\caption{{\footnotesize Random points are scattered in an annulus. We increase the radius $t$. This figure is taken from \cite{owada:2018}.  }}
\label{f:persistence}
\end{center}
\end{figure}
\begin{figure}[!t]
\begin{center}
\includegraphics[width=6cm]{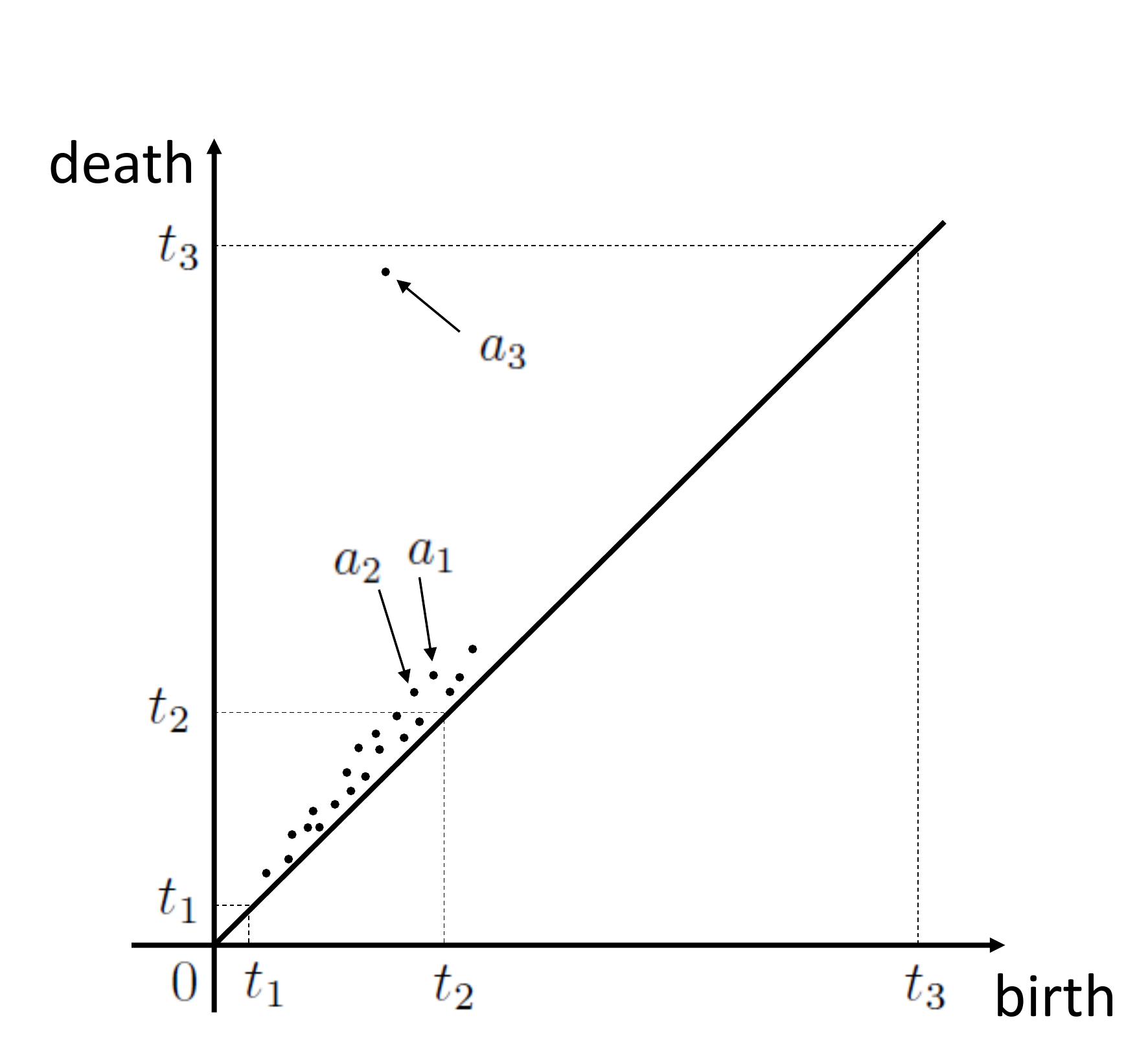}
\caption{{\footnotesize Persistence diagram for the first homology group $H_1$ represented by one-dimensional closed loops. This figure is also taken from \cite{owada:2018}. }}
\label{f:diagram-plot}
\end{center}
\end{figure}

\vspace{5pt}

In applied topology, we frequently place special emphasis on the combinatorial nature of geometric objects by means of a higher-dimensional notion of graphs, called the \emph{geometric complex}. Among many candidates of geometric complexes (see \cite{ghrist:2014}), we especially focus on the \emph{\v{C}ech complex}.
\begin{definition}
Given a set $\X=\{ x_1,\dots,x_n \}$ of points in $\bbr^d$ and a positive number $r>0$, the \v{C}ech complex $\C(\X,r)$ is defined as follows.
\begin{enumerate}
\item The $0$-simplices are the points in $\X$.
\item For each $m\ge 1$, $[ x_{i_0},\dots,x_{i_m} ]\subset \X$ forms an $m$-simplex if $\bigcap_{j=0}^{m} B(x_{i_j},r/2) \neq \emptyset$.
\end{enumerate}
\end{definition}
The main advantage of the \v{C}ech complex is its homotopy equivalence to the union of balls $U(r/2)$. This fact is known as the Nerve lemma (e.g., Theorem 10.7 in \cite{bjorner:1995}).

In the present work, we are interested in the asymptotic behavior of persistence diagrams associated with random \v{C}ech complexes. More specifically, let $\X_n=\{ X_1,\dots,X_n \}$ be an iid random sample on $\bbr^d$ with common density $f$. Then, we define the \emph{\v{C}ech filtration} by
\begin{equation}  \label{e:cech.filtration}
\mathcal C(r_n^{-1}\X_n):= \big(  \C (r_n^{-1}\X_n, t), \, t \ge 0\big) = \big(  \C (\X_n, r_nt), \, t \ge 0\big),
\end{equation}
over a ``scaled" random sample $r_n^{-1}\X_n:= \{  r_n^{-1}X_1, \dots, r_n^{-1}X_n\}$, such that $r_n\to0$ as $n\to\infty$.
Note that \eqref{e:cech.filtration} represents a nested sequence of \v{C}ech complexes, satisfying monotonicity property
$$
\C(\X_n, r_ns) \subset \C(\X_n, r_nt)  \ \ \text{for all } 0 < s \le t.
$$

In many of the studies on the stochastic topology of geometric complexes, (persistent) Betti numbers have been employed as a good quantifier of  topological complexity \cite{kahle:2011, kahle:meckes:2013, yogeshwaran:adler:2015, yogeshwaran:subag:adler:2017, bobrowski:mukherjee:2015, hiraoka:shirai:trinh:2018, krebs:polonik:2019}.
Given an integer $k\ge 0$, let $Z_k\big( \C(\X_n, r_nt) \big) = \text{ker } \partial_k$ be the $k$th cycle group of the \v{C}ech complex in \eqref{e:cech.filtration}, where $\partial_k$ is a boundary homomorphism. Additionally, denote by $B_k\big( \C(\X_n, r_nt) \big) = \text{im }\partial_{k+1}$ the $k$th boundary group of the same complex. Then, the $k$th homology group $H_k\big(  \C(\X_n,r_nt) \big) := Z_k\big( \C(\X_n, r_nt) \big)  / B_k\big( \C(\X_n, r_nt) \big)$,
is defined as the quotient group, representing the elements of (non-trivial) $k$-dimensional cycles as a boundary of a $(k+1)$-dimensional body. Hereinafter, we call it a ``$k$-cycle" for short. The \emph{$k$th Betti number}, denoted by
\begin{equation}  \label{e:def.kth.Betti}
\betakn(t):= \beta_k\big( \C(\X_n, r_nt) \big),
\end{equation}
is the rank of $H_k\big( \C(\X_n, r_nt) \big)$, representing the number of $k$-cycles in the \v{C}ech complex.
More generally, the \emph{$k$th \emph{persistent Betti number}} of $\mathcal C(r_n^{-1}\X_n)$ is defined as
\begin{equation}  \label{e:def.persistent.Betti.intro}
\betakn(s,t) := \beta_k^{s,t}\big( \mathcal C(r_n^{-1}\X_n) \big) = \text{dim} \frac{Z_k\big( \C(\X_n, r_ns) \big)}{Z_k\big( \C(\X_n, r_ns) \big) \cap B_k \big( \C(\X_n, r_nt) \big) }, \ \ 0 \le s \le t <\infty.
\end{equation}
More intuitively, $\betakn(s,t)$ represents the number of $k$-cycles that appear in $\mathcal C(r_n^{-1}\X_n)$ before time $s$ and remain alive at time $t$. In particular, if $s=t$, then $\betakn(t,t)$
reduces to the usual Betti number in \eqref{e:def.kth.Betti}. A more rigorous coverage of these algebraic topological notions can be found in \cite{munkres:1996, hatcher:2002, edelsbrunner:letscher:zomorodian:2002}.

The earliest results of examining persistence diagrams from a probabilistic viewpoint were presented in \cite{mileyko:mukherjee:harer:2011}, \cite{turner:mileyko:mukherjee:harer:2014}, the authors of which developed the definition of probability measures that support expectations, variances, and conditional probabilities. In more recent years, in \cite{chazal:divol:2018} it was shown that, for a wide class of filtrations, the expected persistence diagram as a Radon point measure has a density 
with respect to the Lebesgue measure.
From the viewpoints of stochastic topology, the studies most relevant to this paper are described in \cite{hiraoka:shirai:trinh:2018, trinh:2020, divol:polonik:2019}, in which the authors considered persistence diagrams as a point process and related them to the persistent Betti numbers.
In particular, \cite{hiraoka:shirai:trinh:2018} provided a strong law of large numbers (SLLN) for the persistence diagram generated by a stationary point process in the so-called \emph{critical (or thermodynamic) regime}. 
In our notation, the critical regime is understood as the condition $nr_n^d\to c$ as $n\to\infty$ for some constant $c\in (0,\infty)$. In this setting, \v{C}ech complexes become highly connected, forming many large components of cycles of various dimensions \cite{yogeshwaran:subag:adler:2017, owada:thomas:2020, goel:duy:tsunoda:2019}.

In contrast to the previous works cited in the last paragraph, the main focus of this paper is the \emph{sparse regime}: $nr_n^d\to0$ as $n\to
\infty$. 
Then, the spatial distribution of \v{C}ech complexes is more sparse and less complicated than the critical regime because of a faster decay of $r_n$ as a result of $nr_n^d\to0$.
The main point of the current paper is that, for every $k\ge 1$,
the behavior of the persistence diagram, associated with the $k$-cycles in the \v{C}ech filtration, splits into three different regimes:
\begin{equation}  \label{e:three.regimes}
(i) \ n^{k+2}r_n^{d(k+1)}\to \infty,  \ \ (ii) \ n^{k+2}r_n^{d(k+1)}\to c\in (0,\infty), \ \ (iii) \ n^{k+2}r_n^{d(k+1)}\to0.
\end{equation}
More specifically, the rate of the sequence $\seq$ determines various phase transitions of the $k$th homology group of the \v{C}ech complex $\C(\X_n, r_nt)$. For instance, in view of the expectation of $\betakn(t)$, it holds that
\begin{equation}  \label{e:exp.betaknt}
\E\big[ \betakn(t) \big] \sim C\seq, \ \ \text{as } n\to\infty,
\end{equation}
for some $C>0$ (see \cite{kahle:2011, kahle:meckes:2013}).

In particular, if $n^{k+2}r_n^{d(k+1)} \to \infty$, as in case $(i)$, the central limit theorem holds for $\betakn(t)$; that is,
$$
\frac{\betakn(t) - \E[\betakn(t)]}{\sqrt{n^{k+2}r_n^{d(k+1)}}}
$$
converges weakly to a centered Gaussian process \cite{kahle:meckes:2013, owada:thomas:2020}.
For the asymptotics of persistence diagram, Theorem \ref{t:divergence.regime} below claims that if the persistence diagram is scaled by the sequence $n^{k+2}r_n^{d(k+1)}$, it converges to a deterministic Radon measure almost surely in the vague metric. The obtained result can be viewed as a SLLN for persistence diagrams. This also implies that the number of birth-death pairs grows at the rate of $n^{k+2}r_n^{d(k+1)}$, so that the limiting persistence diagram consists of infinitely many birth-death pairs as $n\to\infty$. As for the proof techniques, our approach relies on a fundamental relation between a persistence diagram and the persistent Betti numbers $\big(\beta_{k,n}(s,t), 0 \le s \le t \le \infty\big)$ in \eqref{e:def.persistent.Betti.intro}; that is, the former can be expressed as a simple linear combination of the latter (see \eqref{e:PD-persistent.Betti} for an explicit expression). Owing to this relation, along with technical results for an underlying vague convergence (e.g., Corollary A.3 and Proposition 3.4 in \cite{hiraoka:shirai:trinh:2018}), the SLLNs for a persistence diagram can be obtained from those for persistent Betti numbers. The main difficulty here is  that the scaler for $\betakn(s,t)$ may grow very slowly (e.g., logarithmically); in such a case, a direct use of the Borel--Cantelli lemma, together with the lower-order moment calculations, does not help to establish the required SLLN. To overcome this issue, we employ the concentration inequalities in \cite{bachmann:reitzner:2018}, which themselves were developed for analyzing Poisson $U$-statistics of the geometric configuration of a point cloud. For the application of these concentration bounds, one needs to detect appropriate subsequential upper and lower bounds for the quantities that are used to approximate persistent Betti numbers. This approach is a higher-dimensional version of a  standard technique for the theory of geometric graphs; see, e.g., Chapter 3 of the monograph \cite{penrose:2003}. 

Suppose next that $r_n$ decays faster, so that $n^{k+2}r_n^{d(k+1)} \to c \in (0,\infty)$, as in case $(ii)$ of \eqref{e:three.regimes}. Then, $\E[\betakn(t)]$ is asymptotically a positive constant. As a result, $\betakn(t)$ converges weakly  
to the difference of two dependent Poisson processes  \cite{kahle:meckes:2013, owada:thomas:2020}.   Theorem \ref{t:Poisson.regime} below reveals that the  persistence diagram  weakly converges to a limiting point process without normalization. As expected, the limiting point process possesses a Poissonian structure (see \eqref{e:characterization.limit.Poisson}). This implies that the number of cycles is not large, and asymptotically, all the $k$-cycles affecting the limiting persistence diagram are necessarily formed by components of size $k+2$ (i.e., components of the smallest size). Our proof techniques for Theorem \ref{t:Poisson.regime} are closely related to those in Theorem 5.1 of \cite{owada:thomas:2020}. By virtue of an aforementioned linear  relation between a persistence diagram and the persistent Betti number $\beta_{k,n}(s,t)$, we need to demonstrate the Poisson limit theorem for $\beta_{k,n}(s,t)$.  This  will follow from weak convergence of a point process induced by the persistent Betti number (see \eqref{e:pp.conv.for.main}). For the proof, we show directly a set of sufficient conditions provided in Theorem 3.1 of \cite{decreusefond:schulte:thaele:2016}. 

Finally, if $n^{k+2}r_n^{d(k+1)}\to 0$ as in case $(iii)$ of \eqref{e:three.regimes}, the occurrence of $k$-cycles becomes even rarer, in the sense of $\E[\betakn(t)]\to 0$, $n\to\infty$. 
Accordingly, the persistence diagram converges to the null measure (i.e., the measure assigning zeros to all measurable sets in the diagram). 
In this case, the sequence of probability distributions of a persistence diagram has to be normalized. 
The problem is however that the probability distribution of a persistence diagram is defined in the space of Radon point measures. Since this space is not locally compact,  the corresponding convergence cannot be treated in vague topology. Alternatively, we employ  the $\M_0$-topology, a standard topology that has been  used for the study of regular variation of point processes and stochastic processes \cite{hult:samorodnitsky:2010, lindskog:resnick:roy:2014, segers:zhao:meinguet:2017}.  Theorem \ref{t:vanishing.regime} below gives a more precise statement. In particular, Section \ref{sec:proof.vanishing} proves   sufficient conditions that are based on Theorem A.2 in \cite{hult:samorodnitsky:2010}, for the required convergence. 

For easy reference later on,  we refer to case $(i)$ of \eqref{e:three.regimes} as the \emph{divergence regime}, since \eqref{e:exp.betaknt} indicates that $\E\big[\betakn(t)\big]$ diverges as $n\to\infty$. We also refer to case $(ii)$ as the \emph{Poisson regime}, in that the weak limit of $\betakn(t)$ is Poisson distributed. Finally, case $(iii)$ is called the \emph{vanishing regime}, for which $\betakn(t)$ vanishes in the sense of an expectation.

The potential applications of the current study are in topological data analysis (TDA). Denote by $D_{k,n}$ the $k$th persistence diagram of \v{C}ech filtration \eqref{e:cech.filtration}.
A common practice in TDA is to transform persistence diagrams into the representation
\begin{equation}  \label{e:representation.PD}
\sum_{(b_i,d_i)\in D_{k,n}} \phi(b_i,d_i),
\end{equation}
where $(b_i,d_i)$ is the list of the $k$th persistence birth-death pairs (see Section \ref{sec:setup} for a formal definition) and $\phi$ is a real-valued function.
One example of such functions is the $\alpha$th total persistence, given by $\phi(x,y)=(y-x)^\alpha$ (see \cite{divol:polonik:2019}). In the special case $\alpha =1$, it reduces to the sum of persistence barcodes \cite{ghrist:2008, carlsson:2009}.
One of the primary benefits of our results is that one may give a probabilistic foundation to the functional in \eqref{e:representation.PD}. Indeed, when the asymptotic theory for persistence diagrams has been completed, the standard machinery via the continuous mapping theorem for different modes of convergence (see \cite{resnick:1987, kallenberg:2017, hult:lindskog:2006a}) may yield a variety of limit theorems for \eqref{e:representation.PD}.
This will be technically challenging, since many of the required functionals are not continuous, especially in the region close to the diagonal line. Nonetheless, it will be possible to overcome this difficulty by means of a well established approximation scheme that was recently developed by the authors of \cite{divol:polonik:2019} (see also \cite{divol:lacombe:2021}). This line of research remains a future topic of our research.

The remainder of this paper is structured as follows. First, Section \ref{sec:setup} provides a formal definition of a persistence diagram as a point process, expressing it as a function of persistent Betti numbers. All the main results on the limiting persistence diagram are presented in Section \ref{sec:main.results}. This section is divided into three parts, corresponding to each of the regimes in  \eqref{e:three.regimes}. All the proofs are deferred to Section \ref{sec:proof}.

Before concluding the Introduction, let us add a few more comments on our setup and assumptions. First, we assume that the density $f$ of a random sample $\X_n$ is a.e.~continuous and bounded. Although it seems possible to obtain the same results under a weaker assumption that $\int_{\bbr^d} f(x)^{2k+4}dx<\infty$, we decided to rely on stronger assumptions. By doing so, we can avoid technical arguments based on the Lebesgue differentiation theorem. Second, we consider only persistence diagrams associated with \v{C}ech filtration. However, the proposed methods seem to be applicable to other geometric complexes. In particular, in the case of a Vietoris--Rips filtration, all the results obtained can be carried over by a simple replacement of the scaler $n^{k+2}r_n^{d(k+1)}$ in \eqref{e:three.regimes} with an appropriate one.

\section{Setup}  \label{sec:setup}

We start with a formal definition of the space for persistence diagrams. First, let $[0,\infty)\times [0,\infty]$ be a product space endowed with the product topology. Define an infinite triangle in the first quadrant by
$$
\Delta := \big\{ (x,y): 0\le x \le y <\infty \big\}\cup \big\{ (x,\infty): 0 \le x <\infty \big\},
$$
and equip $\Delta$ with the relative topology of $[0,\infty)\times [0,\infty]$. Let $L=\big\{ (x,x):0\le  x <\infty \big\}$ be the diagonal line in the first quadrant. Throughout the paper, we take $\DelL:=\Delta\setminus L = \big\{(x,y): 0\le x < y\le \infty\big\}$ as an underlying space for persistence diagrams.

Let
$$
\A = \big\{ (s,t] \times (u,v], \, [0,t]\times (u,v], \, 0\le s \le t \le u \le v \le \infty,\, t < \infty \big\},
$$
and denote an element of $\A$ by $\genR$ (if $s=0$, it represents either $(0,t]\times (u,v]$ or $[0,t]\times (u,v]$).
Next, we define
$$
D_{k,n}= D_k\big( \mathcal C(r_n^{-1}\X_n) \big) = \big\{ (b_i, d_i)\in \Delta: i=1,\dots,n_k \big\}
$$
to be the $k$th persistence diagram associated with the \v{C}ech filtration \eqref{e:cech.filtration}. Here, $\big( (b_i,d_i) \big)_{i=1}^{n_k}$ is the list of the $k$th persistence birth-death pairs, representing the time at which each $k$-cycle first appears in $\mathcal C (r_n^{-1}\X_n)$ and the time at which it is terminated (or filled in), respectively. We then define $D_{k,n}$ as a point process,
\begin{equation}  \label{e:def.PD}
\xi_{k,n} := \sum_{(b_i,d_i)\in D_{k,n}} \delta_{(b_i,d_i)},
\end{equation}
where $\delta_{(x,y)}$ is the Dirac measure at $(x,y)\in \bbr^2$. A key relation between \eqref{e:def.PD} and the persistent Betti number \eqref{e:def.persistent.Betti.intro} is that, for $0\le s \le t \le u \le v\le \infty$,
\begin{equation}  \label{e:PD-persistent.Betti}
\xi_{k,n}(\genR) = \betakn(t,u) - \betakn(t,v) - \betakn(s,u) + \betakn(s,v).
\end{equation}
Finally, we introduce a certain indicator function that is used for characterizing the limiting objects for each of the regimes in \eqref{e:three.regimes}. For $r>0$ and $(x_1,\dots,x_{k+2})\in (\bbr^d)^{k+2}$, define
\begin{align}
h_r(x_1,\dots,&x_{k+2}) := \one \Big\{  \beta_k \big(  \C( \big\{  x_1,\dots,x_{k+2} \big\},   r) \big) = 1 \Big\} \label{e:def.hr}\\
&= \one \, \bigg\{ \Big\{  \bigcap_{j=1, \, j \neq j_0}^{k+2}B(x_j, r/2) \neq \emptyset \text{ for all } j_0 \in \{ 1,\dots,k+2 \} \Big\} \cap \Big\{  \bigcap_{j=1}^{k+2}B(x_j, r/2) = \emptyset \Big\} \bigg\}, \notag
\end{align}
where $\one \{ \cdot \}$ is an indicator function. This indicator function requires that a set $\{x_1,\dots,x_{k+2}\}$ of points in $\bbr^d$ forms a single $k$-cycle with connectivity radius $r$.

\section{Main results}  \label{sec:main.results}

In this section, we present the limit theorems for persistence diagrams. For ease of discussion, we first treat the Poisson regime and then move on to the other two cases. Recall that $\X_n=\{ X_1,\dots,X_n\}$ represents iid random variables on $\bbr^d$ with common density $f$. We assume that $f$ is a.e.~continuous and bounded, that is,  $\| f\|_\infty := \text{esssup}_{x\in\bbr^d} f(x) < \infty$. Moreover, we take an integer $k\ge1$, which remains fixed in the following. Since our main focus is the sparse regime, we assume throughout the paper that $nr_n^d\to0$ as $n\to\infty$.

\subsection{Poisson regime}

Assume that $n^{k+2}r_n^{d(k+1)} \to c$ as $n\to\infty$ for some constant $c\in (0,\infty)$. Write $\lambda$ for the $d(k+1)$-dimensional Lebesgue measure and $C_k:= ((k+2)!)^{-1}\int_{\bbr^d}f(x)^{k+2}dx$.
Let $M_+(\DelL)$ be the space of Random measures on $\DelL$, which is equipped with \emph{vague topology} (see \cite{kallenberg:2017, resnick:1987}).
Additionally, denote by $M_p(\DelL)$ the subset in $M_+(\DelL)$ of all Radon point measures. Note that $M_p(\DelL)$ is a closed subset of $M_+(\DelL)$ in vague topology (see Proposition 3.14 in \cite{resnick:1987}).
Next, for $\bx=(x_1,\dots,x_{k+2}) \in (\bbr^d)^{k+2}$ we define
$$
\genH(\bx) := h_{t}(\bx)h_{u}(\bx) - h_{t}(\bx)h_{v}(\bx) - h_{s}(\bx)h_{u}(\bx) + h_{s}(\bx) h_{v}(\bx), \ \ 0 \le s\le t \le u \le v\le \infty,
$$
where the indicators on the right hand side are defined in \eqref{e:def.hr}.

Before stating the main result, we need to define the limiting point process of \eqref{e:def.PD}. First, we note that $\A$ forms a semi-ring of subsets of $\DelL$. Indeed, one can check that: $(i) \ \emptyset\in \A$; $(ii)$ if $A,B\in \A$, then $A\cap B\in \A$; and $(iii)$ if $A,B\in \A$, then $A\setminus B=\bigcup_{i=1}^m C_i$ for some finite disjoint sets $C_1,\dots,C_m\in \A$. Moreover, $\A$ has the  covering property; i.e., for any open $G\subset \DelL$, there exists $(A_i)_{i=1}^\infty \subset \A$ so that $G=\bigcup_{i=1}^\infty A_i$. In particular, $\A$ generates a Borel $\sigma$-field on $\DelL$.
It then follows from Proposition 9.2.~III.~in \cite{daley:verejones:2008} that one can define a point process $\zeta_k$, whose probability distribution on $M_p(\DelL)$ is uniquely determined by the finite-dimensional distributions
\begin{align}
&\P \big( \zeta_k (\genRi) =m_i, \ i=1,\dots,d\big) \label{e:characterization.limit.Poisson}\\
&= \P \Big( \int_{(\bbr^d)^{k+1}} \genHi(0,y_1,\dots,y_{k+1}) M_k(d\by) = m_i, \ i=1,\dots,d \Big) \notag
\end{align}
for $d\ge 1$, and $m_i\in \bbn$, $0\le s_i \le t_i \le u_i \le v_i \le \infty$, $i=1,\dots,d$. Here, $M_k$ denotes the Poisson random measure on $(\bbr^d)^{k+1}$ with mean measure $C_k \lambda$. Namely, the distribution of $M_k$ is defined as
$$
M_k(A) \sim \text{Poi}\big( C_k\lambda(A) \big)
$$
for all measurable $A \subset (\bbr^d)^{k+1}$ (``Poi" stands for a Poisson distribution). Furthermore, if $A_1,\dots,A_m$ are disjoint subsets in $(\bbr^d)^{k+1}$, then $M_k(A_1), \dots, M_k(A_m)$ are independent.

As defined, the marginal distribution of $\zeta_k$ depends on a linear combination of the indicators in \eqref{e:def.hr}. This implies that the $k$-cycles affecting $\zeta_k$ must always be formed by connected components on $k+2$ points (i.e., components of the smallest size).
It is also easy to check that $\zeta_k(\genR)$ has a Poisson law with mean $C_k\int_{(\bbr^d)^{k+1}} \genH(0,y_1,\dots,y_{k+1})d\by$. Further characterization of \eqref{e:characterization.limit.Poisson} (as a random field) is provided in \cite{owada:2018}.

\begin{theorem}  \label{t:Poisson.regime}
Suppose $n^{k+2}r_n^{d(k+1)} \to c\in (0,\infty)$ as $n\to\infty$. Then, as $n\to\infty$,
\begin{equation}  \label{e:weak.conv.xikn}
\xi_{k,n} \Rightarrow c\zeta_k \ \ \text{in } M_p(\DelL),
\end{equation}
where $\Rightarrow$ denotes weak convergence.
\end{theorem}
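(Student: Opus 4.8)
The plan is to exploit the linear relation \eqref{e:PD-persistent.Betti}, which expresses every evaluation $\xi_{k,n}(\genR)$ as a fixed linear combination of persistent Betti numbers, and thereby reduce the weak convergence of the diagram to a Poisson limit theorem for a point process built from the persistent Betti numbers. Since $\A$ is a covering semiring that generates the Borel $\sigma$-field of $\DelL$, a standard criterion for weak convergence of point processes in $M_p(\DelL)$ (convergence of finite-dimensional distributions on a convergence-determining semiring, cf.\ \cite{kallenberg:2017, resnick:1987}) shows that it suffices to prove that, for any finite family $\genRi\in\A$, $i=1,\dots,d$, the joint law of $\big(\xi_{k,n}(\genRi)\big)_{i=1}^d$ converges to that of $\big(c\zeta_k(\genRi)\big)_{i=1}^d$ determined by \eqref{e:characterization.limit.Poisson}.

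First I would replace $\xi_{k,n}$ by a minimal-cluster approximation. In the sparse regime a contributing $k$-cycle can, to leading order, only be carried by a connected component on exactly $k+2$ vertices, so I would introduce the point process
\[
\widehat\xi_{k,n} = \sum_{1\le i_1<\cdots<i_{k+2}\le n} \delta_{(b_n,\, d_n)}\,\one\big\{ \{X_{i_1},\dots,X_{i_{k+2}}\}\ \text{is an isolated single $k$-cycle}\big\},
\]
where $(b_n,d_n)$ are the rescaled birth and death radii of that cycle. Using the indicator \eqref{e:def.hr} one checks the pointwise identity $\genH(0,\by)=\one\{(b(\by),d(\by))\in(s,t]\times(u,v]\}$, so that $\widehat\xi_{k,n}(\genR)$ is precisely the $U$-statistic obtained by summing $H_{r_ns,\,r_nt,\,r_nu,\,r_nv}$ over $(k+2)$-subsets. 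The first technical step is to show $\xi_{k,n}(\genR)-\widehat\xi_{k,n}(\genR)\to 0$ in probability for each $\genR\in\A$: the contributions of $k$-cycles spanned by more than $k+2$ points, and of configurations in which two minimal clusters share a vertex or interact, have expectation of smaller order than $\seq$ and are therefore negligible once $nr_n^d\to 0$.

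Next I would establish $\widehat\xi_{k,n}\Rightarrow c\zeta_k$ by verifying the sufficient conditions of Theorem~3.1 in \cite{decreusefond:schulte:thaele:2016} for the induced $U$-statistic point process (either directly in the binomial setting or after de-Poissonization). Two inputs are needed. The intensity must converge: by the change of variables $X_{i_j}\mapsto X_{i_1}+r_n Y_j$ and $\binom{n}{k+2}\sim n^{k+2}/(k+2)!$, together with the a.e.\ continuity and boundedness of $f$ (dominated convergence extracting $\int_{\bbr^d} f^{k+2}$), one obtains $\E[\widehat\xi_{k,n}(\genR)]\to c\,C_k\int \genH(0,\by)\,d\by=\E[c\zeta_k(\genR)]$, which is exactly where the constant $c$ (from $\seq\to c$) and $C_k$ enter. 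The second input is the vanishing of the second-order coupling term in the Kantorovich--Rubinstein bound of \cite{decreusefond:schulte:thaele:2016}, namely that the expected number of pairs of overlapping minimal clusters tends to $0$; this again follows from sparseness. Joint convergence over $\genRi$, $i=1,\dots,d$, comes for free, since all limiting atoms are governed by the single Poisson random measure $M_k$.

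The main obstacle I anticipate is the approximation step rather than the Poisson computation. One must control the homological subtlety that a $(k+2)$-subset locally supporting a $k$-cycle may fail to count toward $\betakn$ because the cycle is globally a boundary (filled in by interactions with nearby points), while conversely larger components can still generate cycles; showing that both effects are $o(\seq)$ in expectation requires careful moment bounds on the relevant configuration integrals and is the crux of the argument. Granting this, the continuous functional supplied by the linear relation \eqref{e:PD-persistent.Betti} transfers the Poisson limit for $\widehat\xi_{k,n}$ to $\xi_{k,n}$, yielding $\xi_{k,n}\Rightarrow c\zeta_k$ in $M_p(\DelL)$.
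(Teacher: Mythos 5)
Your proposal reproduces, in its essentials, the paper's own strategy: reduce \eqref{e:weak.conv.xikn} to convergence of finite-dimensional distributions over the semiring $\A$ via the linear relation \eqref{e:PD-persistent.Betti}, approximate by $U$-statistics over $(k+2)$-point subsets, verify the two conditions (intensity convergence and vanishing overlap term) of Theorem 3.1 in \cite{decreusefond:schulte:thaele:2016}, and use $nr_n^d\to0$ to kill the approximation error. The one structural difference is that you want to apply the Kantorovich--Rubinstein bound directly to a diagram-level process on $\DelL$, while the paper applies it to an auxiliary process on $(0,\infty]$ whose atoms are the values $\sum_i a_ih_{r_ns_i}(\Y)h_{r_nt_i}(\Y)$ (a Cram\'er--Wold device, see \eqref{e:pp.conv.for.main}), and must then invoke the continuous mapping theorem with a truncation functional, a Laplace-transform identification of the limit, and a separate small-atom estimate. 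Your variant would shortcut that machinery, and your pointwise identity $\genH(0,\by)=\one\{(b(0,\by),d(0,\by))\in\genR\}$ is correct.

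There are, however, two genuine gaps. First, any fidi-type criterion for weak convergence in $M_p(\DelL)$ --- the paper uses Proposition 11.1.VIII.(iv) of \cite{daley:verejones:2008} --- requires the test sets to be stochastic continuity sets of the \emph{limit} process, i.e., $\P\big(\zeta_k(\partial\genR)=0\big)=1$ for every $\genR\in\A$; the same property is needed to read off joint convergence of the counts $\big(\widehat\xi_{k,n}(\genRi)\big)_{i=1}^d$ from weak convergence of $\widehat\xi_{k,n}$ as a point process, since the evaluation map $\mu\mapsto\mu(A)$ is continuous only at measures not charging $\partial A$. This is exactly requirement \eqref{e:first.requirement}, whose verification (Markov's inequality, Fatou's lemma, \eqref{e:characterization.limit.Poisson}, dominated convergence) occupies a nontrivial part of the paper's proof; your proposal is silent on it.

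Second, your $\widehat\xi_{k,n}$ is not set up so that the cited theorem applies to it. With the isolation constraint in its definition, the summand depends on the whole sample $\X_n$ rather than only on the $(k+2)$-subset, so $\widehat\xi_{k,n}$ is not a $U$-statistic and Theorem 3.1 of \cite{decreusefond:schulte:thaele:2016} cannot be invoked; this also contradicts your own claim that $\widehat\xi_{k,n}(\genR)$ equals the sum of $H_{r_ns,r_nt,r_nu,r_nv}$ over subsets. If instead you drop isolation, a different problem appears: the resulting diagram-level process has diverging total intensity on $\DelL$, because any $(k+2)$-tuple of sample points, however spread out, forms a $k$-cycle at some (huge, rescaled) radius with probability bounded away from zero, so $\E\big[\widehat\xi_{k,n}(\DelL)\big]\asymp n^{k+2}$ and the total-variation condition \eqref{e:total.variation} cannot hold on all of $\DelL$. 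The fix is to drop isolation \emph{and} truncate --- working either on $\{(x,y)\in\DelL: x\le T\}$ or, as the paper effectively does, with the process on $(0,\infty]$ generated by a fixed finite family $\genRi$ (finiteness of the $t_i$ provides the truncation) --- and then to push isolation and all larger-component effects into the approximation step, where they are bounded by counts of connected $(k+3)$-point configurations of expected order $n^{k+3}r_n^{d(k+2)}=o(1)$; this is the content of Lemma \ref{l:Betti.bounds} and \eqref{e:diff.Betti.Gkn}. Both gaps are fixable, but they are precisely where the paper does its real work.
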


\subsection{Divergence regime}  \label{sec:divergence.regime}

Next, we turn to the case for which $n^{k+2}r_n^{d(k+1)} \to \infty$ as $n\to\infty$. In this case, there appear infinitely many $k$-cycles as $n\to\infty$ (see \eqref{e:exp.betaknt}), and accordingly, the limiting persistence diagram of $\xi_{k,n}$ consists of infinitely many persistence birth-death pairs, so that $\xi_{k,n}(\genR)\to \infty$ as $n\to\infty$ for all $0\le s \le t \le u \le v\le \infty$. Hence, in order to obtain a non-degenerate limit of $\xi_{k,n}$, the process itself must be normalized by a growing sequence.
Let $C_K^+(\DelL)$ be a collection of  non-negative  and continuous functions on $\DelL$ with compact support.  Recall that a sequence of Radon measures $(\nu_n)\subset M_+(\DelL)$ is said to converge vaguely to $\nu \in M_+(\DelL)$, denoted by $\nu_n\stackrel{v}{\to}\nu$ in $M_+(\DelL)$, if $\int_{\DelL}f(x)\nu_n(dx) \to \int_{\DelL}f(x)\nu(dx)$ for all $f\in C_K^+(\DelL)$.

For the theorem below, we need to 
apply a mild condition to the decay rate of $r_n$. More precisely, we assume that it is a regularly varying sequence (at infinity) with exponent $\rho<0$,
$$
\lim_{n\to\infty}\frac{r_{\lfloor an \rfloor}}{r_n} = a^\rho \ \ \text{ for all } a>0.
$$

Finally, for two sequences $(a_n)$ and $(b_n)$, write $a_n = \Omega(b_n)$ if there exists a constant $C>0$ such that $a_n/b_n \ge C$ for all $n \ge 1$. 
\begin{theorem}  \label{t:divergence.regime}
Suppose  $n^{k+2}r_n^{d(k+1)} \to \infty$ as $n\to\infty$. Assume that $(r_n)$ is a regularly varying sequence with exponent $\rho<0$, such that
\begin{equation}  \label{e:faster.than.log}
n^{k+2}r_n^{d(k+1)} = \Omega \big( (\log n)^\eta \big)
\end{equation}
for some $\eta > 0$.
Then, there exists a unique Radon measure $\mu_k\in M_+(\DelL)$ such that
\begin{equation}  \label{e:vague.as.conv}
\frac{\xi_{k,n}}{n^{k+2}r_n^{d(k+1)}} \stackrel{v}{\to} \mu_k, \ \ \text{almost surely in } M_+(\DelL),
\end{equation}
and
\begin{equation}  \label{e:exp.vague.conv}
\frac{\E[\xi_{k,n}]}{n^{k+2}r_n^{d(k+1)}} \stackrel{v}{\to} \mu_k, \ \ \text{in } M_+(\DelL),
\end{equation}
where $\E[\xi_{k,n}]$ denotes the mean measure of $\xi_{k,n}$, which itself is a Radon measure on $\DelL$. The measure $\mu_k$ satisfies
\begin{equation}  \label{e:limit.measure.divergence}
\mu_k(\genR) = C_k \int_{(\bbr^d)^{k+1}} \genH (0,y_1,\dots,y_{k+1})d\by,
\end{equation}
except for at most countably many $\genR\in \A$.
\end{theorem}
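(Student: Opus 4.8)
The plan is to transfer everything to the persistent Betti numbers via the linear identity \eqref{e:PD-persistent.Betti} and then prove a strong law of large numbers (SLLN) for the latter. Writing $a_n := \seq$, by \eqref{e:PD-persistent.Betti} it suffices to show that for each fixed pair $0\le s\le t<\infty$,
\[
\frac{\betakn(s,t)}{a_n} \longrightarrow \widehat\beta_k(s,t) \quad \text{a.s.\ and in expectation}
\]
for some deterministic limit, since $\xi_{k,n}(\genR)/a_n$ is then a fixed finite linear combination of four such ratios. Once the a.s.\ convergence of $\xi_{k,n}(\genR)/a_n$ is in hand for every $\genR\in\A$, I would invoke the vague-convergence criterion on the generating semiring $\A$ (Corollary A.3 and Proposition 3.4 of \cite{hiraoka:shirai:trinh:2018}) to upgrade convergence on $\A$ to vague convergence of measures, yielding \eqref{e:vague.as.conv}; the same criterion applied to the mean measures gives \eqref{e:exp.vague.conv}. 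Uniqueness of $\mu_k$ follows because $\A$ generates the Borel $\sigma$-field and vague limits are unique.

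The first substantive step is to approximate the global quantity $\betakn(s,t)$ by a local $U$-statistic. In the sparse regime $nr_n^d\to 0$, the dominant contribution to $k$-cycles comes from connected components supported on exactly $k+2$ points (components of minimal size), as encoded by the indicator $h_r$ of \eqref{e:def.hr}. I would sandwich $\betakn(s,t)$ between upper and lower $U$-statistics built from $h_r$ and the configuration function $\genH$, and show that both bounds share the same leading order. A first-moment computation, using $\binom{n}{k+2}\sim n^{k+2}/(k+2)!$, the scaling $x_j\mapsto r_n x_j$, boundedness and a.e.\ continuity of $f$, and dominated convergence, gives
\[
\frac{\E[\xi_{k,n}(\genR)]}{a_n} \longrightarrow C_k \int_{(\bbr^d)^{k+1}} \genH(0,y_1,\dots,y_{k+1})\,d\by =: \mu_k(\genR),
\]
which both identifies the limit \eqref{e:limit.measure.divergence} and proves \eqref{e:exp.vague.conv}. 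The clause ``except for at most countably many $\genR\in\A$'' is precisely the set of rectangles whose boundary is charged by $\mu_k$; this set is countable, and such rectangles are excluded by the continuity-set requirement built into vague convergence.

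The hard part is the a.s.\ convergence when $a_n$ grows slowly. The obstruction is that a naive variance estimate combined with the Borel--Cantelli lemma requires $\sum_n \P\big(|\xi_{k,n}(\genR)/a_n - \mu_k(\genR)|>\epsilon\big)<\infty$, and when $a_n$ is only of logarithmic order this sum need not converge along the full sequence. To circumvent this I would (i) apply the concentration inequalities for Poisson $U$-statistics of \cite{bachmann:reitzner:2018} (after Poissonizing the sample and then de-Poissonizing) to the bounding $U$-statistics, obtaining stretched-exponential tail estimates of the form $\P\big(|U_n-\E U_n|>\epsilon a_n\big)\le \exp(-c\,a_n^{\alpha})$ for some $\alpha,c>0$; and (ii) pass to a geometric subsequence $n_j=\lceil \theta^j\rceil$, $\theta>1$. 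Along this subsequence, assumption \eqref{e:faster.than.log} gives $a_{n_j}=\Omega\big((\log n_j)^\eta\big)=\Omega(j^\eta)$, so the tail bound becomes $\exp(-c'\,j^{\eta\alpha})$, which is summable for \emph{every} $\eta>0$; Borel--Cantelli then yields a.s.\ convergence along $(n_j)$.

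The remaining, and most delicate, ingredient is the interpolation from the subsequence to the full sequence, where I expect the principal technical obstacle to lie. For $n_j\le n\le n_{j+1}$ both the point count and the radius $r_n$ lie between their values at the endpoints, and regular variation guarantees $r_{n_{j+1}}/r_{n_j}\to \theta^\rho$, so the scalers $a_{n_j}$ and $a_{n_{j+1}}$ are comparable. The relevant functionals are not globally monotone, but the bounding $U$-statistics can still be sandwiched by their endpoint values using the separate monotone components of $h_r$ (the condition $\bigcap_{j\ne j_0}B(x_j,r/2)\neq\emptyset$ is increasing in $r$, while $\bigcap_{j}B(x_j,r/2)=\emptyset$ is decreasing), together with controlled perturbations of the radius. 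Letting $\theta\downarrow 1$ then closes the gap, exactly as in the geometric-graph arguments of \cite{penrose:2003}, Chapter 3. Verifying that adding the intermediate $\asymp(\theta-1)n_j$ points and shrinking the radius alters $\betakn(s,t)$ by only $o(a_n)$ is the crux of this step and the heart of the whole argument.
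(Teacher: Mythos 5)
Your proposal follows, in all essential respects, the same route as the paper's proof: reduction to the persistent Betti numbers via \eqref{e:PD-persistent.Betti}; the convergence-determining-class machinery of Corollary A.3 and Proposition 3.4 in \cite{hiraoka:shirai:trinh:2018} to pass from convergence on $\A$ to vague convergence; approximation of $\betakn(s,t)$ by the $U$-statistic counting isolated $(k+2)$-point cycles (the paper's Lemma \ref{l:Betti.bounds}); identification of the limit \eqref{e:limit.measure.divergence} by a first-moment computation; and almost sure convergence via the concentration inequalities of \cite{bachmann:reitzner:2018} (Proposition \ref{p:concentration}), Poissonization/de-Poissonization (Lemma \ref{l:diff.Poisson.binomial}), and Borel--Cantelli along a blocked subsequence with a monotone sandwich in between, using the decomposition $h_r = h_r^{(+)} - h_r^{(-)}$ into $r$-monotone indicators (the paper's \eqref{e:def.hrpm}), which you also identified. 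The one structural difference is the blocking scheme: you take a geometric subsequence $n_j=\lceil\theta^j\rceil$ and close the gap by letting $\theta\downarrow 1$ at the end, in the style of Chapter 3 of \cite{penrose:2003}, whereas the paper takes $v_m=\lfloor e^{m^\gamma}\rfloor$ with $\gamma\in(0,1)$, for which $v_{m+1}/v_m\to 1$ (Lemma \ref{l:RV.seq}); this makes the upper and lower sandwich bounds asymptotically tight outright, so no final limit in $\theta$ is needed. Both blockings are compatible with \eqref{e:faster.than.log}: the stretched-exponential tails become $\exp(-Cm^{\gamma\eta/(k+2)})$ in the paper and $\exp(-c'j^{\eta\alpha})$ in your version, each summable for every $\eta>0$, so your variant does work and is marginally more classical, at the cost of the extra $\theta\downarrow1$ step.

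One slip you should repair: you assert that for $n_j\le n\le n_{j+1}$ ``the radius $r_n$ lies between its values at the endpoints.'' Regular variation with exponent $\rho<0$ does \emph{not} imply monotonicity of $(r_n)$, so this is false in general, and the sandwich as you state it does not hold. The paper's fix is to define, within each block, the indices $p_m$ and $q_m$ at which $r_\ell$ attains its maximum and minimum (see \eqref{e:def.pm.qm}) and to sandwich with these extremal radii; the uniform convergence theorem for regularly varying sequences (Proposition 2.4 in \cite{resnick:2007}) then gives $r_{p_m}/r_{q_m}\to 1$ --- in your geometric-block version the corresponding ratio tends to a limit in $[\theta^\rho,1]$, which still closes as $\theta\downarrow1$. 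With this modification your argument goes through; also note that, for fixed $\theta>1$, the discrepancy between the endpoint bounds is of order $(\theta^{k+2+\rho d(k+1)}-1)\,\seq$ rather than $o(\seq)$, so the correct statement of your final step is that the gap vanishes only in the limit $\theta\downarrow 1$, not for each fixed $\theta$.
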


In spite of the significant difference in the proof techniques, the results above more or less parallel Theorem 1.5 in \cite{hiraoka:shirai:trinh:2018} (see also Theorem 1.1 in \cite{trinh:2020}), in which the authors obtained the SLLN for persistence diagrams in the critical regime. In the critical regime, \v{C}ech complexes are highly connected, so that they form many large components of cycles of any dimension. As a consequence, the limiting measures in \cite{hiraoka:shirai:trinh:2018, trinh:2020} are so complicated that they do not have an explicit representation, as in \eqref{e:limit.measure.divergence}.

\subsection{Vanishing regime}

Once again, we return to \eqref{e:three.regimes} and consider case $(iii)$ for which $n^{k+2}r_n^{d(k+1)} \to 0$ as $n\to\infty$.
In this case, it holds that $\E\big[ \betakn(t) \big] \sim C n^{k+2}r_n^{d(k+1)} \to 0$ as $n\to\infty$. This indicates that all the $k$-cycles vanish in the limit, and accordingly, $\xi_{k,n}$ converges to the null measure $\emptyset \in M_p(\DelL)$ (i.e., the measure assigning zeros to all measurable sets in $\DelL$). In particular, we have, for all $m=1,2,\dots$,
$$
\lim_{n\to\infty} \P \big( \xi_{k,n}(\genR) = m \big) = 0, \ \ 0\le s \le t \le u \le v \le \infty.
$$
In the stochastic topology literature, not necessarily related to random geometric complexes, a fundamental interest lies in how rapidly each homology group appears and disappears \cite{bobrowski:weinberger:2017, fowler:2019, kahle:2014, kahle:pittel:2016, skraba:thoppe:yogeshwaran:2020}. In the same spirit, we are naturally interested in the decay rate of the sequence $(\P\circ \xi_{k,n}^{-1})$ of probability measures on $M_p(\DelL)$. However, the space $M_p(\DelL)$ is not locally compact, and thus, the resulting convergence can no longer be treated under vague topology, as in the last section. To overcome this difficulty, we adopt a notion of \emph{$\M_0$-topology}. This notion was first developed by the authors of \cite{hult:lindskog:2006a}. Since then, it has been intensively used, especially in extreme value theory, for the study of the regular variation of point processes and stochastic processes \cite{hult:samorodnitsky:2010, lindskog:resnick:roy:2014, segers:zhao:meinguet:2017}.
The main benefit of employing $\M_0$-topology is that it requires only that the underlying space be complete and separable. Since $M_p(\DelL)$ is complete and separable (see Proposition 3.17 in \cite{resnick:1987}), we can utilize $\M_0$-topology as an appropriate topology for the convergence below.

Let $B_{\emptyset, r}$ denote an open ball of radius $r>0$ centered at the null measure $\emptyset\in M_p(\DelL)$ in terms of the metric induced by vague topology. Denote by $\M_0 = \M_0\big( M_p(\DelL) \big)$ the space of Borel measures on $M_p(\DelL)$, the restriction of which to
 $M_p(\DelL)\setminus \ B_{\emptyset, r}$ is finite for all $r>0$. Moreover, define $\mathcal C_0  =\mathcal C_0\big( M_p(\DelL) \big)$ to
 be the space of continuous and bounded real-valued functions on $M_p(\DelL)$ that vanish in the neighborhood of $\emptyset$. Finally, given $m_n, m \in \M_0$, the convergence $m_n\to m$ in $\M_0$ is defined as $\int_{M_p(\DelL)}f(x)m_n(dx) \to \int_{M_p(\DelL)}f(x)m(dx)$ for all $f\in \mathcal C_0$.

\begin{theorem}  \label{t:vanishing.regime}
Suppose $n^{k+2}r_n^{d(k+1)} \to 0$ as $n\to\infty$. Then, as $n\to\infty$,
\begin{align*}
(n^{k+2}r_n^{d(k+1)})^{-1} \P (\xi_{k,n}\in \cdot) \to C_k \lambda \Big\{ \by\in (\bbr^d)^{k+1}: \delta_{( b(0,\by), d(0,\by))} \in \cdot \Big\} \ \ \text{in } \M_0,
\end{align*}
where $\by=(y_1,\dots,y_{k+1}) \in (\bbr^d)^{k+1}$ and for $\bx=(x_1,\dots,x_{k+2})\in (\bbr^d)^{k+2}$,
\begin{align*}
b(\bx) &= \inf\big\{ t\ge 0: h_t (\bx)=1 \big\},   \\
d(\bx) &= \inf\big\{ t\ge b(\bx): h_t (\bx)=0\big\}
\end{align*}
(by convention, we take $\inf \emptyset \equiv \infty$).
\end{theorem}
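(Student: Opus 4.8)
Write $a_n=n^{k+2}r_n^{d(k+1)}$ and let $\nu_k$ denote the limit measure on the right-hand side, $\nu_k(\cdot)=C_k\lambda\{\by\in(\bbr^d)^{k+1}:\delta_{(b(0,\by),d(0,\by))}\in\cdot\}$. The plan is to verify the sufficient conditions for $\M_0$-convergence furnished by Theorem A.2 of \cite{hult:samorodnitsky:2010}, which here reduce to: (i) $\nu_k\in\M_0$, i.e.\ $\nu_k\big(M_p(\DelL)\setminus B_{\emptyset,r}\big)<\infty$ for all $r>0$; and (ii) $a_n^{-1}\E[f(\xi_{k,n})]\to\int f\,d\nu_k$ for all $f$ in a convergence-determining subfamily of $\mathcal C_0$, for which one may take the bounded Lipschitz functions vanishing on a neighborhood of $\emptyset$. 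Condition (i) I would check directly: if $f\in\mathcal C_0$ vanishes on $B_{\emptyset,\vep}$, then by the definition of the vague metric there is a fixed compact $K\subset\DelL$ with $f(\mu)=0$ whenever $\mu(K)=0$; since a compact subset of $\DelL$ is bounded away from the diagonal and from infinity, the requirement $(b(0,\by),d(0,\by))\in K$ restricts $\by$ to a set of finite Lebesgue measure, so $\nu_k(\{f\neq0\})<\infty$.

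The core is (ii), which I would establish by reduction to \emph{isolated minimal cycles}. Call a $(k+2)$-subset $\mathcal Y\subset r_n^{-1}\X_n$ an isolated minimal cycle if its points form a single $k$-cycle in the sense of \eqref{e:def.hr} over the bounded window of radii relevant to $f$, while no further sample point is \v{C}ech-connected to $\mathcal Y$ within that window; such $\mathcal Y$ then contributes exactly one birth--death pair $(b(\mathcal Y),d(\mathcal Y))$. Put $\widehat\xi_{k,n}=\sum_{\mathcal Y}\one\{\mathcal Y\text{ isolated minimal cycle}\}\,\delta_{(b(\mathcal Y),d(\mathcal Y))}$. Using first- and second-moment estimates for the relevant $U$-statistics over $\X_n$, I would show that $a_n^{-1}\big|\E[f(\xi_{k,n})]-\E\big[\int_{\DelL}f(\delta_x)\,\widehat\xi_{k,n}(dx)\big]\big|\to0$. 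This rests on three bounds, each $o(a_n)$ precisely because $nr_n^d\to0$: (a) $k$-cycles carried by components on $m>k+2$ vertices have expected count $O(n^m r_n^{d(m-1)})=a_n\cdot O((nr_n^d)^{m-k-2})$; (b) minimal cycles failing to be isolated require an extra point within $O(r_n)$, giving a further factor $O(nr_n^d)$; and (c) the gap between $f(\widehat\xi_{k,n})$ and $\int f(\delta_x)\,\widehat\xi_{k,n}(dx)$ is supported on configurations carrying at least two minimal cycles, of probability $O(a_n^2)$ by a second factorial moment bound. Since $f$ vanishes at $\emptyset$, the empty case drops out and on the single-atom event the two functionals agree.

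It remains to evaluate the leading term. A first-moment calculation gives
\[
a_n^{-1}\E\Big[\int_{\DelL}f(\delta_x)\,\widehat\xi_{k,n}(dx)\Big]=a_n^{-1}\binom{n}{k+2}\int_{(\bbr^d)^{k+2}}f\big(\delta_{(b(\bx),d(\bx))}\big)\,g_n(\bx)\prod_{i=1}^{k+2}f(x_i)\,d\bx,
\]
where $g_n(\bx)$ collects the single-cycle and isolation indicators at scale $r_n$. The substitution $x_1=x$, $x_j=x+r_ny_{j-1}$ ($2\le j\le k+2$), with Jacobian $r_n^{d(k+1)}$ and $\binom{n}{k+2}\sim n^{k+2}/(k+2)!$, together with the scale invariance of the \v{C}ech functionals (which turns $(b(\bx),d(\bx))$ into $(b(0,\by),d(0,\by))$ in rescaled coordinates), the a.e.\ continuity and boundedness of $f$ to replace $\prod_j f(x+r_ny_j)$ by $f(x)^{k+2}$, and the fact that the isolation factor tends to $1$, yields by dominated convergence the limit
\[
\frac{1}{(k+2)!}\int_{\bbr^d}f(x)^{k+2}\,dx\int_{(\bbr^d)^{k+1}}f\big(\delta_{(b(0,\by),d(0,\by))}\big)\,d\by=\int f\,d\nu_k,
\]
which is (ii).

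I expect the main obstacle to be the error control (a)--(c): securing moment bounds for these geometric $U$-statistics that hold uniformly over the compact birth--death window prescribed by the support of $f$, and confirming that each normalized contribution is genuinely $o(1)$. The sparse hypothesis $nr_n^d\to0$ is exactly what renders larger components and non-isolation subdominant, while $a_n\to0$ eliminates the multiple-cycle term; a secondary point is to reconcile the vague-metric support condition on $f\in\mathcal C_0$ with the topological functionals $b,d$, so that replacing $f(\xi_{k,n})$ by its value on a single atom is justified.
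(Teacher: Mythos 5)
Your overall skeleton is the same as the paper's: reduce to a single \emph{isolated minimal cycle} on $k+2$ points, kill cycles carried by components on $\ge k+3$ vertices via $n^{k+3}r_n^{d(k+2)} = a_n\cdot O(nr_n^d)$ (writing $a_n = n^{k+2}r_n^{d(k+1)}$ as in your proposal), kill non-isolation by the same extra factor, kill multiple-cycle configurations by second-moment bounds, and evaluate the leading term through the substitution $x_i = x + r_n y_{i-1}$, continuity of the density, and dominated convergence. The paper's events $U_n$, $V_n$, $W_n$ and its terms $B_n, D_n, H_n, J_n, I_n$ implement precisely your (a), (b), (c) and your final first-moment computation. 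The genuine problem lies in your functional-analytic framing. Theorem A.2 of \cite{hult:samorodnitsky:2010} is not the statement you attribute to it: its hypothesis concerns exactly the functionals $F_{H_1,H_2,\epsilon_1,\epsilon_2}(\xi) = \prod_{\ell=1}^2\big(1-e^{-(\xi(H_\ell)-\epsilon_\ell)_+}\big)$ with $H_\ell\in C_K^+(\DelL)$, and the paper's proof exploits crucially that these are \emph{local}: they depend on $\xi$ only through $\xi(H_1),\xi(H_2)$, hence only through the restriction of $\xi$ to a fixed compact window $[0,T]\times(0,\infty]$. You swap this family for bounded Lipschitz functions vanishing near $\emptyset$, which are not local, and that is where your step ``on the single-atom event the two functionals agree'' fails.

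Concretely: on your good event one has $\xi_{k,n} = \delta_{(b,d)} + \eta_n$, where $\eta_n$ collects the birth--death pairs with birth time exceeding $T$. These out-of-window atoms are not rare in the sparse regime; the filtration runs over all radii, the first cycles appear at birth times of order $a_n^{-1/(d(k+1))}$, and eventually there are many of them, so $\P(\eta_n\neq 0)\to 1$. The property you extract from vanishing on $B_{\emptyset,\vep}$ --- existence of a compact $K$ with $f(\mu)=0$ whenever $\mu(K)=0$ --- controls only the zero set of $f$; it does not imply $f(\delta_{(b,d)}+\eta_n) = f(\delta_{(b,d)})$, because once $\mu(K)\neq 0$ a general $f$ in your class may depend on the whole measure. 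Hence the discrepancy $f(\xi_{k,n})-f(\widehat\xi_{k,n})$ is \emph{not} supported on your rare events (a)--(c); it is driven by configurations of probability close to one, and your error budget never touches it. The same unjustified identification appears in your leading-term formula, where $f$ is evaluated at single-atom measures. The gap is repairable in two ways: either adopt the paper's local test family (the clean route, and exactly what Theorem A.2 supplies), or keep Lipschitz test functions and add a truncation argument in the vague metric $d(\mu,\nu)=\sum_j 2^{-j}\min\big(|\mu(f_j)-\nu(f_j)|,1\big)$: fix $J$, bound $\E\big[\eta_n(f_j)\,\one\{\text{cycle in window}\}\big]$ for each $j\le J$ by pair-correlation estimates of the kind you already use (including the overlapping-tuple terms, which give $a_n(nr_n^d)^{k+2-\ell}$ rather than $a_n^2$), and send $J\to\infty$ after $n\to\infty$. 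As written, however, the replacement step is a genuine gap --- and it is the very point you dismiss as ``secondary,'' when it is in fact the reason the paper chooses the test functions it does.
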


\begin{remark}
We here provide a more detailed structure of the limiting measure
$$
\eta(\cdot) := C_k \lambda \Big\{ \by\in (\bbr^d)^{k+1}: \delta_{( b(0,\by), d(0,\by))} \in \cdot \Big\}.
$$
For $m\ge 1$ and $0 \le s_i \le t_i \le u_i \le v_i \le \infty$, $i=1,\dots,m$, we define
a map $T:M_p(\DelL) \to [0,\infty)^m$ by $T(\xi)=\big( \xi(\genRi)\big)_{i=1}^m$. It is then straightforward to check that
$$
\eta\circ T^{-1} (\cdot)= C_k \lambda \Big\{ \by \in (\bbr^d)^{k+1}: \Big( \one \big\{  (b(0,\by), d(0,\by)) \in \genRi \big\} \Big)_{i=1}^m \in \cdot \Big\}.
$$
Observe also that, for every $i\in \{ 1,\dots,m \}$, $\big( b(0,\by), d(0,\by) \big) \in \genRi$ holds if and only if
\begin{equation}  \label{e:4cond}
h_{s_i}(0,\by) = 0, \ \ h_{t_i}(0,\by) = h_{u_i}(0,\by) = 1, \ \ h_{v_i}(0,\by) = 0.
\end{equation}
In particular, \eqref{e:4cond} requires that a point set $\{ 0,\by \} = \{ 0,y_1,\dots,y_{k+1} \}\in (\bbr^d)^{k+2}$ forms a single $k$-cycle between times $s_i$ and $t_i$, such that this cycle disappears between times $u_i$ and $v_i$.
Combining these observations, we finally obtain that
$$
\eta\circ T^{-1}(\cdot) = C_k \lambda \Big\{ \by \in (\bbr^d)^{k+1}: \Big( \big(  1-h_{s_i}(0,\by)\big) h_{t_i}(0,\by)h_{u_i}(0,\by) \big(  1-h_{v_i}(0,\by)\big)\Big)_{i=1}^m \in \cdot \Big\}.
$$
\end{remark}

\section{Proof}   \label{sec:proof}
This section presents the proofs of all the main theorems in Section \ref{sec:main.results}. First, for a set $\Y$ of $k+2$ points in $\bbr^d$ and a finite set $\Z \supset\Y$ in $\bbr^d$, and $0\le r_1 \le r_2\le \infty$,  we define an indicator function,
$$
g_{r_1,r_2}(\Y,\Z) := h_{r_1}(\Y)h_{r_2}(\Y)\, \one \big\{  \C(\Y, r_2) \text{ is a connected component of } \C(\Z, r_2) \big\}.
$$

Below, we first present a preparatory lemma, which claims that the sum of these indicators can approximate the persistent Betti numbers. As mentioned in Section \ref{sec:main.results}, we assume that the density $f$ of a random sample $\X_n$ is a.e.~continuous and bounded. Moreover, we assume that $nr_n^d \to 0$ as $n\to\infty$.

\begin{lemma} \label{l:Betti.bounds}
For all $0\le s \le t\le \infty$,
\begin{equation}  \label{e:persistent.Betti.bound}
\sum_{\substack{\Y\subset \X_n, \\ |\Y|=k+2}} g_{r_ns, r_nt}(\Y,\X_n) \le \betakn(s,t) \le \sum_{\substack{\Y\subset \X_n, \\ |\Y|=k+2}} g_{r_ns, r_nt}(\Y,\X_n)  +\binom{k+3}{k+1}L_{r_nt},
\end{equation}
where
$$
L_r = \sum_{\substack{\Y\subset \X_n, \\ |\Y|=k+3}} \one \big\{ \C(\Y,r) \text{ is connected} \big\}, \ \ \ r>0.
$$
Moreover, we have for all $0<t<\infty$
\begin{equation}  \label{e:conv.Lr}
(n^{k+2}r_n^{d(k+1)})^{-1} \E[L_{r_nt}] \to 0, \ \ \ n\to\infty.
\end{equation}
\end{lemma}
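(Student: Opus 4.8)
The plan is to establish the two-sided estimate \eqref{e:persistent.Betti.bound} by decomposing $\betakn(s,t)$ over the connected components of $\C(\X_n,r_nt)$, and then to obtain \eqref{e:conv.Lr} by a direct first-moment computation. Write $C_1,\dots,C_M$ for the vertex sets of the connected components of $\C(\X_n,r_nt)$. The chain complex of $\C(\X_n,r_nt)$ splits as a direct sum over the $C_j$, and since $s\le t$ every simplex present at time $r_ns$ has its vertices inside a single $C_j$, so the time-$r_ns$ subcomplex respects this splitting as well. Consequently both $Z_k\big(\C(\X_n,r_ns)\big)$ and $B_k\big(\C(\X_n,r_nt)\big)$ decompose accordingly, giving $\betakn(s,t)=\sum_{j=1}^M \beta_k^{s,t}(C_j)$, where $\beta_k^{s,t}(C_j)$ is the persistent Betti number of the restriction to $C_j$. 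This structural identity is what both bounds rest on.

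For the lower bound I would show that the isolated minimal configurations counted by the $g$-sum already produce independent persistent cycles. If $g_{r_ns,r_nt}(\Y,\X_n)=1$, then $\Y$ is exactly an isolated $(k+2)$-point component of $\C(\X_n,r_nt)$, and since $h_{r_ns}(\Y)=1$ it carries the boundary-sphere $k$-cycle $z_\Y$, born by time $s$; since $h_{r_nt}(\Y)=1$ and $\Y$ is isolated, $z_\Y$ cannot be filled by time $t$, so $z_\Y\notin B_k\big(\C(\X_n,r_nt)\big)$. Distinct such $\Y$ have disjoint vertex supports, so the $z_\Y$ are linearly independent modulo $B_k\big(\C(\X_n,r_nt)\big)$, which yields $\sum_\Y g_{r_ns,r_nt}(\Y,\X_n)\le\betakn(s,t)$.

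The upper bound is the heart of the argument and I would split it by component size. Components with $|C_j|<k+2$ support no $k$-cycle. For $|C_j|=k+2$ the component is automatically isolated, a surviving cycle forces $h_{r_ns}(C_j)=h_{r_nt}(C_j)=1$, and one checks $\beta_k^{s,t}(C_j)=g_{r_ns,r_nt}(C_j,\X_n)$; summing over these reproduces the $g$-sum exactly. For $|C_j|\ge k+3$ I would use the chain $\beta_k^{s,t}(C_j)\le \beta_k\big(\C(C_j,r_nt)\big)\le f_k\big(\C(C_j,r_nt)\big)$, with $f_k$ the number of $k$-simplices: the first inequality holds because $Z_k$ at time $r_ns$ injects into $H_k$ at time $r_nt$ (its kernel being $Z_k(s)\cap B_k(t)$), the second because cycles sit inside $k$-chains. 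The face count is then controlled by a double count: each $k$-simplex is a $(k+1)$-subset whose vertices are mutually close, and by connectivity of $C_j$ together with $|C_j|\ge k+3$ it extends to at least one connected $(k+3)$-subset, while each connected $(k+3)$-subset contains at most $\binom{k+3}{k+1}$ subsets of size $k+1$. Hence $f_k\big(\C(C_j,r_nt)\big)\le \binom{k+3}{k+1}\,L^{(j)}$, with $L^{(j)}$ the number of connected $(k+3)$-subsets of $C_j$; summing over $j$ and using $\sum_j L^{(j)}=L_{r_nt}$ completes \eqref{e:persistent.Betti.bound}.

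Finally, for \eqref{e:conv.Lr} I would apply the first-moment ($U$-statistic) formula, writing $\E[L_{r_nt}]=\binom{n}{k+3}\int_{(\bbr^d)^{k+3}}\one\{\C(\{x_1,\dots,x_{k+3}\},r_nt)\text{ is connected}\}\prod_{i=1}^{k+3}f(x_i)\,dx$. Substituting $x_i=x_1+r_ny_{i-1}$ and using scale invariance of connectivity turns the indicator into $\one\{\C(\{0,y_1,\dots,y_{k+2}\},t)\text{ is connected}\}$ and produces a Jacobian $r_n^{d(k+2)}$; bounding $\prod_i f(x_1+r_ny_i)\le\|f\|_\infty^{k+2}$ and noting that the connectivity region for $(y_1,\dots,y_{k+2})$ has bounded diameter, hence finite Lebesgue measure, gives $\E[L_{r_nt}]\le C\,n^{k+3}r_n^{d(k+2)}$ for a finite $C=C(t)$. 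Dividing by $n^{k+2}r_n^{d(k+1)}$ leaves a factor of order $nr_n^d$, which tends to $0$ in the sparse regime. The main obstacle is the upper bound: justifying the component decomposition of the persistent Betti number and, above all, finding the right combinatorial control for large components. The key geometric subtlety is that a $k$-cycle in the \v{C}ech complex need not be a single empty $(k+1)$-simplex---already for $k=1$ an ``empty square'' on four points is a nontrivial cycle---so one cannot charge cycles to empty minimal simplices and must instead pass through $f_k$ and connected $(k+3)$-subsets, which is precisely where the constant $\binom{k+3}{k+1}$ enters.
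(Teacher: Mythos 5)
Your proof is correct, and in substance it rests on the same ingredients as the paper's: the lower bound comes from the fact that isolated $(k+2)$-point cycles alive on $[s,t]$ are necessarily counted by $\betakn(s,t)$; the upper bound for large components uses the identical charging argument, namely $\beta_k \le f_k$ together with the observation that every $k$-simplex in a connected component on at least $k+3$ vertices extends to a connected $(k+3)$-subset, each of which carries at most $\binom{k+3}{k+1}$ many $k$-simplices; and your verification of \eqref{e:conv.Lr} is the same first-moment computation with the change of variables $x_i = x_1 + r_n y_{i-1}$ and the bound $\prod_i f \le \|f\|_\infty^{k+2}$. The one genuine organizational difference is how the upper bound in \eqref{e:persistent.Betti.bound} is reached. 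You decompose the persistent Betti number directly over the connected components of the time-$t$ complex, $\betakn(s,t) = \sum_j \beta_k^{s,t}(C_j)$ (justified by the direct-sum splitting of chains, cycles and boundaries, using that every time-$r_ns$ simplex lies inside a single $C_j$), and then verify the exact identity $\beta_k^{s,t}(C_j) = g_{r_ns,r_nt}(C_j,\X_n)$ when $|C_j| = k+2$. The paper instead detours through the ordinary Betti number: it writes $\betakn(t) = \sum_{i\ge k+2}\sum_{j>0} j \sum_{\Y} g^{(i,j)}_{r_nt}(\Y,\X_n)$, establishes $\betakn(s,t) \le \betakn(t) - A_n(s,t)$, where $A_n(s,t)$ counts isolated $(k+2)$-point cycles born strictly after time $r_ns$, and only then applies the charging bound to the terms with $i \ge k+3$. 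Your route buys a cleaner logical structure---the paper's inequality $\betakn(t) - A_n(s,t) \ge \betakn(s,t)$ is justified only verbally, whereas your component-wise identity is exact---at the cost of having to set up (and justify) the component decomposition for the persistent, rather than ordinary, Betti number; the paper's route avoids that setup by working with quantities whose component decomposition is immediate. Both your lower-bound argument (linear independence of the boundary-sphere cycles $z_\Y$ modulo $B_k$ at time $t$, by disjointness of supports) and your remark that a $k$-cycle need not be a single empty simplex---which is exactly why one must pass through $f_k$ and connected $(k+3)$-subsets---are correct and make explicit points the paper leaves implicit.
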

\begin{proof}
We observe that the leftmost term in \eqref{e:persistent.Betti.bound} represents the number of $k$-cycles built over $k+2$ points that are born before time $r_ns$ and still alive at time $r_nt$, such that each of these $k$-cycles is isolated from the remaining points at time $r_nt$. In contrast, $\betakn(s,t)$ counts all $k$-cycles on $i$ points for all possible $i \ge k+2$ that are born before time $r_ns$ and still alive at time $r_nt$. Hence, $\betakn(s,t)$ counts more $k$-cycles than does the leftmost term in \eqref{e:persistent.Betti.bound}; thus, the inequality on the left hand side of \eqref{e:persistent.Betti.bound} has been obtained.

For the remaining inequality in \eqref{e:persistent.Betti.bound}, one needs an explicit representation of $\betakn(t)$.  For $(x_1,\dots,x_i)\in (\bbr^d)^i$ with $i \ge k+2$, $j\ge 1$, and $r>0$,  we define
$$
\hij_r (x_1,\dots,x_i) := \one \Big\{ \beta_k \big( \C\big( \{x_1,\dots,x_i \}, r\big) \big) = j, \, \, \C\big( \{x_1,\dots,x_i\}, r \big) \text{ is connected} \Big\}.
$$
Clearly, $h_r^{(k+2,1)}(x_1,\dots,x_{k+2}) = h_r(x_1,\dots,x_{k+2})$. Moreover, for a set $\Y$ of $i$ points in $\bbr^d$ and a finite set $\Z \supset\Y$ in $\bbr^d$, and $r>0$, we define
$$
\gij_r(\Y, \Z) := \hij_r(\Y)\, \one \big\{ \C(\Y, r) \text{ is a connected component of } \C(\Z, r) \big\}.
$$
Note that $h_r^{(k+2,j)}(\Y)$ and $g_r^{(k+2,j)}(\Y,\X_n)$ are identically zero for all $j\ge 2$, since it is impossible to form multiple $k$-cycles from $k+2$ points.
With these indicators available, $\betakn(t)$ can be represented as
\begin{equation}  \label{e:representation.kth.Betti}
\betakn(t) = \beta_k\big( \C(\X_n, r_nt) \big) = \sum_{i=k+2}^n \sum_{j>0} j \sum_{\substack{\Y\subset \X_n, \\  |\Y|=i}} \gij_{r_nt}(\Y, \X_n).
\end{equation}

Using this representation, we claim that
\begin{equation}  \label{e:first.claim.ineq}
\betakn(s,t) \le \sum_{\substack{\Y\subset \X_n, \\ |\Y|=k+2}}g_{r_ns, r_nt}(\Y, \X_n) + \sum_{i=k+3}^n \sum_{j>0} j \sum_{\substack{\Y\subset \X_n, \\  |\Y|=i}} \gij_{r_nt}(\Y, \X_n).
\end{equation}
In fact, by \eqref{e:representation.kth.Betti} the right hand side of \eqref{e:first.claim.ineq} is equal to
\begin{align*}
&\sum_{\substack{\Y\subset \X_n, \\ |\Y|=k+2}}g_{r_ns, r_nt}(\Y, \X_n) +\betakn(t) - \sum_{\substack{\Y\subset \X_n, \\ |\Y|=k+2}}g_{ r_nt}^{(k+2,1)}(\Y, \X_n) \\
&= \betakn(t) - \sum_{\substack{\Y\subset \X_n, \\ |\Y|=k+2}} \big( 1-h_{r_ns}(\Y) \big) g_{r_nt}^{(k+2,1)} (\Y, \X_n) =: \betakn(t) -A_n(s,t).
\end{align*}
Here, $A_n(s,t)$ represents the number of $k$-cycles on $k+2$ points that are born between times $r_ns$ and $r_nt$ and still alive at time $r_nt$, such that each of these $k$-cycles is isolated from the remaining points at time $r_nt$. Hence, by the definition of $\betakn(s,t)$, it holds that $\betakn(t) -A_n(s,t) \ge \betakn(s,t)$.

By \eqref{e:first.claim.ineq}, it now remains to show that
\begin{equation}  \label{e:second.claim.ineq}
\sum_{i=k+3}^n \sum_{j>0} j \sum_{\substack{\Y\subset \X_n, \\  |\Y|=i}} \gij_{r_nt}(\Y, \X_n) \le \binom{k+3}{k+1} L_{r_nt}.
\end{equation}
To prove this, rewrite the left hand side of \eqref{e:second.claim.ineq} as
\begin{align}
&\sum_{i=k+3}^n \sum_{j>0} j \sum_{\substack{\Y\subset \X_n, \\  |\Y|=i}} \gij_{r_nt}(\Y, \X_n) \label{e:rewrite.above} \\
&= \sum_{i=k+3}^n \sum_{\substack{\Y\subset \X_n, \\  |\Y|=i}} \beta_k\big( \C(\Y, r_nt) \big) \one \big\{ \C(\Y, r_nt) \text{ is a connected component of } \C(\X_n, r_nt) \big\}. \notag
\end{align}
Here, we recall that $\betakn\big( \C(\Y, r_nt) \big)$ is bounded by the number of $k$-simplices on $\Y$ (with connectivity radius $r_nt$). Denote such $k$-simplex counts by $f_{k,r_nt}(\Y)$.

Suppose now that  $\C(\Y, r_nt)$ is a connected component of $\C(\X_n, r_nt)$ for some $i\ge k+3$ and $\Y\subset \X_n$ with $|\Y|=i$. Then, there exists a point set $\Z\subset \Y$ with $|\Z|=k+3$, such that $\C(\Z, r_nt)$ is a \emph{connected} subcomplex of $\C(\Y, r_nt)$.
Every time such a connected subcomplex occurs, it increases $f_{k,r_nt}(\Y)$ by at most $\binom{k+3}{k+1}$.
Note also that all $k$-simplices in $\Y$ are necessarily contained in one such connected subcomplex on $k+3$ points. In conclusion, for each $i\ge k+3$ and $\Y\subset \X_n$ with $|\Y|=i$ such that $\C(\Y,r_nt)$ is a connected component of $\C(\X_n, r_nt)$, we have that
\begin{align*}
\beta_k\big( \C(\Y, r_nt) \big) \le f_{k,r_nt}(\Y)\le  \binom{k+3}{k+1} \sum_{\substack{\Z\subset\Y, \\ |\Z|=k+3}} \one \big\{ \C(\Z, r_nt) \text{ is connected} \big\}.
\end{align*}
Applying this bound to \eqref{e:rewrite.above},
\begin{align*}
&\sum_{i=k+3}^n \sum_{j>0} j \sum_{\substack{\Y\subset \X_n, \\  |\Y|=i}} \gij_{r_nt}(\Y, \X_n) \\
&\le \binom{k+3}{k+1}\sum_{i=k+3}^n \sum_{\Y\subset \X_n, \,  |\Y|=i} \one \big\{ \C(\Y, r_nt) \text{ is a connected component of } \C(\X_n, r_nt) \big\} \\
&\qquad \qquad \qquad \qquad \qquad \qquad  \times \sum_{\Z\subset\Y, \,  |\Z|=k+3} \one \big\{ \C(\Z, r_nt) \text{ is connected} \big\} \\
&= \binom{k+3}{k+1} L_{r_nt},
\end{align*}
where the last equality is due to the fact that no two different connected components of $\C(\X_n, r_nt)$ can contain the same connected subcomplex on $k+3$ vertices.

We now proceed to showing \eqref{e:conv.Lr}. Write
$$
\E[L_{r_nt}] = \binom{n}{k+3} \int_{(\bbr^d)^{k+3}} \one \big\{ \C(\{ x_1,\dots,x_{k+3}\}, r_nt) \text{ is connected}\big\} \prod_{i=1}^{k+3} f(x_i) d\bx.
$$
By the change of variables $x_i = x+r_ny_{i-1}$, $i=1,\dots,k+3$ (with $y_0\equiv 0$), together with the translation invariance of \eqref{e:def.hr},
\begin{align*}
\E[L_{r_nt}] &= \binom{n}{k+3} r_n^{d(k+2)} \int_{\bbr^d}  \int_{(\bbr^d)^{k+2}} \one \big\{ \C(\{ 0,y_1,\dots,y_{k+2}\}, t) \text{ is connected}\big\} \\
&\qquad \qquad \qquad \qquad \qquad \qquad \qquad \qquad \qquad \times f(x) \prod_{i=1}^{k+2}f(x+r_ny_i)d\by dx \\
&\le \frac{\|f\|_\infty^{k+2}}{(k+3)!}\, n^{k+3}r_n^{d(k+2)} \int_{(\bbr^d)^{k+2}} \one \big\{ \C(\{ 0,y_1,\dots,y_{k+2}\}, t) \text{ is connected}\big\} d\by.
\end{align*}
Since $\|f \|_\infty <\infty$ and $nr_n^d \to 0$ as $n\to\infty$, we obtain \eqref{e:conv.Lr}.
\end{proof}

\subsection{Proof of Theorem \ref{t:Poisson.regime}}

Without loss of generality, we prove only the case $c=1$. Furthermore, for simplicity in our proof, we may and do assume that $n^{k+2}r_n^{d(k+1)}=1$. From Proposition 11.1.~VIII.~(iv) of \cite{daley:verejones:2008}, \eqref{e:weak.conv.xikn} follows if one can verify the following two conditions. 
First, one needs to show that
 $\genR$ is a continuity set for $\zeta_k$ for all $0 \le s \le t \le u \le v \le \infty$; that is,
\begin{equation}  \label{e:first.requirement}
\P \big( \zeta_k(\partial \genR) = 0 \big)=1,
\end{equation}
 ($\partial A$ is a boundary of $A$). The second requirement for \eqref{e:weak.conv.xikn} is that
\begin{equation}  \label{e:second.requirement}
\big( \xi_{k,n}(\genRi), \, i=1,\dots,m \big) \Rightarrow \big( \zeta_k(\genRi), \, i=1,\dots,m \big)  \ \ \text{in } [0,\infty)^m
\end{equation}
for all $m\ge 1$ and $0 \le s_i \le t_i \le u_i \le v_i \le \infty$, $i=1,\dots,m$.
Let us first prove \eqref{e:first.requirement}. Note that
\begin{equation}  \label{e:boundary.upper.bound}
\partial \genR \subset \big( \{s\}\times [u,v] \big) \cup \big( \{t\}\times [u,v] \big) \cup \big( [s,t]  \times \{ u\} \big) \cup \big( [s,t]\times \{ v\} \big).
\end{equation}
Now, we show that
\begin{equation}  \label{e:zero.prob.boundary}
\P\Big( \zeta_k\big( \{s\}\times [u,v] \big)=0 \Big)=1, \ \ 0 \le s \le u \le v \le \infty.
\end{equation}
We check only the case $s>0$. Writing
$$
\{s\}\times [u,v] = \bigcap_{n=1}^\infty R_{s-n^{-1}, s+n^{-1}, u-n^{-1}, v},
$$
and appealing to Markov's inequality,
\begin{align*}
\P\Big( \zeta_k\big( \{s\}\times [u,v] \big)\ge1 \Big) &\le \E \Big[  \zeta_k \Big( \bigcap_{n=1}^\infty R_{s-n^{-1}, s+n^{-1}, u-n^{-1}, v} \Big) \Big] =\E \Big[ \lim_{n\to\infty} \zeta_k \big( R_{s-n^{-1}, s+n^{-1}, u-n^{-1}, v} \big) \Big].
\end{align*}
Applying Fatou's lemma together with \eqref{e:characterization.limit.Poisson}, we have that
\begin{align*}
\P\Big( \zeta_k\big( \{s\}\times [u,v] \big)\ge1 \Big) &\le \liminf_{n\to\infty} \E \Big[ \zeta_k \big( R_{s-n^{-1}, s+n^{-1}, u-n^{-1}, v} \big) \Big] \\
&=\liminf_{n\to\infty} \E\Big[ \int_{(\bbr^d)^{k+1}} H_{s-n^{-1}, s+n^{-1}, u-n^{-1}, v} (0,\by) M_k (d\by) \Big] \\
&=C_k \liminf_{n\to\infty}  \int_{(\bbr^d)^{k+1}} H_{s-n^{-1}, s+n^{-1}, u-n^{-1}, v} (0,\by)d\by.
\end{align*}
By the dominated convergence theorem, the last term is equal to $0$, and hence, we have obtained \eqref{e:zero.prob.boundary}. Repeating the same arguments for each of the terms in the right hand side of \eqref{e:boundary.upper.bound}, we obtain \eqref{e:first.requirement}, as desired.

Now, we proceed to show the weak convergence in \eqref{e:second.requirement}. Our proof techniques are  related to those in Theorem 5.1 of \cite{owada:thomas:2020}. Because of \eqref{e:PD-persistent.Betti} and \eqref{e:characterization.limit.Poisson}, it suffices to verify that, as $n\to\infty$,
\begin{equation}  \label{e:persistent.Betti.weak.conv}
\big( \betakn(s_i, t_i), \, i=1,\dots,m \big) \Rightarrow \Big( \int_{(\bbr^d)^{k+1}}h_{s_i}(0,\by)h_{t_i}(0,\by) M_k (d\by), \, i=1,\dots,m \Big)
\end{equation}
for all $m\ge 1$ and $0\le s_i \le t_i \le \infty$, $i=1,\dots, m$, where $\by=(y_1,\dots,y_{k+1})\in (\bbr^d)^{k+1}$. 
In the following, we first demonstrate that
\begin{equation}  \label{e:def.Gknst}
\big( G_{k,n}(s_i, t_i), \, i=1,\dots, m \big) := \Big( \sum_{\Y\subset \X_n, \, |\Y|=k+2} h_{r_ns_i}(\Y) h_{r_nt_i}(\Y), \, i=1,\dots,m \Big)
\end{equation}
converges weakly to the limit process in \eqref{e:persistent.Betti.weak.conv}, and then, it is shown that the difference between \eqref{e:def.Gknst} and the persistent Betti numbers in \eqref{e:persistent.Betti.weak.conv} vanishes in probability as $n\to\infty$.

Now, we claim that for every $m\ge 1$ and $0 \le s_i \le t_i \le \infty$, $a_i\ge 0$, $i=1,\dots, m$,
\begin{equation}  \label{e:pp.conv.for.main}
\eta_{k,n}:= \sum_{\Y\subset \X_n, \, |\Y|=k+2}\delta_{\sum_{i=1}^m a_i h_{r_ns_i}(\Y)h_{r_nt_i}(\Y)} \Rightarrow \eta_k \ \ \text{in } M_p\big((0,\infty]  \big).
\end{equation}
Here, $\eta_k$ is the Poisson random measure on $(0,\infty]$ with finite mean measure $C_k \tau_k$, where
$$
\tau_k (\cdot) = \lambda \Big\{  \by \in (\bbr^d)^{k+1}:  \sum_{i=1}^m a_ih_{s_i}(0,\by)h_{t_i}(0,\by)\in \cdot  \Big\}.
$$
Notice that $\eta_k$ can be represented as $\eta_k=\sum_{i=1}^M\delta_{Y_i}$, where $(Y_i)$ is a sequence of iid random variables with distribution $\tau_k(\cdot)/\tau_k\big( (0,\infty] \big)$ and $M$ is a Poisson random variable with mean $C_k\tau_k\big( (0,\infty] \big)$, so that $(Y_i)$ and $M$ are independent.

Now, we prove \eqref{e:pp.conv.for.main}. After establishing it, we will show the weak convergence of \eqref{e:def.Gknst} via the continuous mapping theorem, together with technical approximation arguments.
By virtue of Theorem 3.1 in  \cite{decreusefond:schulte:thaele:2016}, the following two conditions suffice for \eqref{e:pp.conv.for.main}. The first requirement is the convergence in terms of the total variation distance,
\begin{equation}  \label{e:total.variation}
\sup_A \big| \, \E [\eta_{k,n}(A)] - \E[\eta_k(A)] \, \big| \to 0, \ \ \ n\to\infty,
\end{equation}
where the supremum is taken over all Borel sets in $(0,\infty]$. The second requirement for \eqref{e:pp.conv.for.main} is that
\begin{align}
v_n &=\max_{1\le \ell \le k+1} n^{2k+4-\ell} \int_{(\bbr^d)^{2k+4-\ell}}\one \Big\{ \sum_{i=1}^m a_i h_{r_ns_i}(x_1,\dots,x_{k+2}) h_{r_nt_i}(x_1,\dots,x_{k+2}) \neq 0 \Big\} \label{e:overlapping.prob} \\
&\times \one \Big\{ \sum_{i=1}^m a_i h_{r_ns_i}(x_1,\dots,x_\ell, x_{k+3}, \dots, x_{2k+4-\ell}) h_{r_nt_i}(x_1,\dots,x_\ell, x_{k+3}, \dots, x_{2k+4-\ell}) \neq 0 \Big\} \notag \\
&\qquad \qquad \qquad \qquad \qquad \qquad \qquad \qquad \qquad \qquad \qquad \qquad  \times \prod_{i=1}^{2k+4-\ell}f(x_i)d\bx \to 0, \ \ \ n\to\infty. \notag
\end{align}
For \eqref{e:total.variation}, we obtain that for every Borel set $A\subset (0,\infty)$,
\begin{align}
\E [\eta_{k,n}(A)] &= \binom{n}{k+2} \int_{(\bbr^d)^{k+2}} \one \Big\{ \sum_{i=1}^m a_i h_{r_ns_i}(x_1,\dots,x_{k+2}) h_{r_nt_i}(x_1,\dots,x_{k+2})\in A \Big\} \prod_{i=1}^{k+2} f(x_i) d\bx \label{e:one.side.unif.conv}\\
&=\binom{n}{k+2} r_n^{d(k+1)}\int_{\bbr^d}\int_{(\bbr^d)^{k+1}} \one \Big\{ \sum_{i=1}^m a_i h_{s_i}(0,\by) h_{t_i}(0,\by)\in A \Big\} \notag \\
&\qquad \qquad \qquad \qquad \qquad \qquad\qquad \qquad \qquad \times f(x) \prod_{i=1}^{k+1} f(x+r_ny_i)d\by dx \notag
\end{align}
with $\by=(y_1,\dots,y_{k+1})\in (\bbr^d)^{k+1}$. The second line above is obtained by changing the variables, $x_i=x+r_n y_{i-1}$, $i=1,\dots, k+2$ (with $y_0\equiv 0$) and the translation invariance of \eqref{e:def.hr}.
On the other hand, one can rewrite $\E[\eta_k(A)]$ as
$$
\E[\eta_k(A)]= \frac{1}{(k+2)!}\, \int_{\bbr^d}f(x)^{k+2}dx \int_{(\bbr^d)^{k+1}} \one \Big\{ \sum_{i=1}^m a_i h_{s_i}(0,\by) h_{t_i}(0,\by)\in A \Big\}  d\by.
$$
By the continuity of $f$, it holds that $\prod_{i=1}^{k+1}f(x+r_ny_i) \to f(x)^{k+1}$ as $n\to\infty$ a.e. Moreover, $\binom{n}{k+2} r_n^{d(k+1)}\to 1/(k+2)!$, $n\to\infty$; therefore, the uniform convergence in \eqref{e:total.variation} holds as $n\to\infty$.

For the proof of \eqref{e:overlapping.prob}, 
the change of variables $x_i=x+r_ny_{i-1}$, $i=1,\dots,2k+4-\ell$ (with $y_0\equiv 0$) gives that
\begin{align*}
v_n &=\max_{1\le \ell \le k+1} n^{2k+4-\ell}r_n^{d(2k+3-\ell)} \int_{\bbr^d}\int_{(\bbr^d)^{2k+3-\ell}} \one \Big\{ \sum_{i=1}^m a_i h_{s_i}(0,y_1,\dots,y_{k+1}) h_{t_i}(0,y_1,\dots,y_{k+1})\neq 0 \Big\} \\
&\times \one \Big\{ \sum_{i=1}^m a_i h_{s_i}(0, y_1,\dots,y_{\ell-1}, y_{k+2}, \dots,y_{2k+3-\ell}) h_{t_i}(0, y_1,\dots,y_{\ell-1}, y_{k+2}, \dots,y_{2k+3-\ell})\neq 0 \Big\} \\
&\qquad \qquad \qquad \qquad \qquad \qquad \qquad \qquad \qquad \times f(x)\prod_{i=1}^{2k+3-\ell} f(x+r_ny_{i}) d\by dx \\
&\le \max_{1\le \ell \le k+1} n^{2k+4-\ell}r_n^{d(2k+3-\ell)}  \|f\|_\infty^{2k+3-\ell} \\
&\times \int_{(\bbr^d)^{2k+3-\ell}} \one \Big\{ \sum_{i=1}^m a_i h_{s_i}(0,y_1,\dots,y_{k+1}) h_{t_i}(0,y_1,\dots,y_{k+1})\neq 0 \Big\} \\
&\times \Big\{ \sum_{i=1}^m a_i h_{s_i}(0, y_1,\dots,y_{\ell-1}, y_{k+2}, \dots,y_{2k+3-\ell}) h_{t_i}(0, y_1,\dots,y_{\ell-1}, y_{k+2}, \dots,y_{2k+3-\ell})\neq 0 \Big\} d\by.
\end{align*}
The last expression vanishes as $n\to\infty$, because the integral is finite and
$$
\max_{1\le \ell \le k+1} n^{2k+4-\ell}r_n^{d(2k+3-\ell)}   = \max_{1\le \ell \le k+1} (nr_n^d)^{k+2-\ell} \to 0, \ \ \ n\to\infty.
$$
Now, we obtain \eqref{e:pp.conv.for.main}.

Subsequently, we claim that, as $n\to\infty$,
\begin{equation}  \label{e:first.term.asym}
\sum_{i=1}^m a_i G_{k,n}(s_i,t_i) \Rightarrow \sum_{i=1}^m a_i \int_{(\bbr^d)^{k+1}} h_{s_i}(0,\by)h_{t_i}(0,\by)M_k(d\by).
\end{equation}
Since the choice of $a_i$'s is arbitrary, \eqref{e:first.term.asym} is equivalent to
\begin{equation}  \label{e:first.term.asym2}
\big( G_{k,n}(s_i,t_i), \, i=1,\dots,m \big) \Rightarrow \Big( \int_{(\bbr^d)^{k+1}} h_{s_i}(0,\by) h_{t_i}(0,\by) M_k(d\by), \, i=1,\dots,m \Big).
\end{equation}
For the proof of \eqref{e:first.term.asym}, let $\gamma>0$ and define a continuous map $\widehat T_\gamma: M_p\big( (0,\infty] \big)\to \bbr_+$ by $\widehat T_\gamma \big( \sum_j \delta_{x_j} \big)=\sum_j x_j \one \{ x_j \ge \gamma \}$ (the continuity of $\widehat T_\gamma$ is proven in Section 7.2.3 of \cite{resnick:2007}). Applying the continuous mapping theorem to  \eqref{e:pp.conv.for.main}, we have
$$
\widehat T_\gamma(\eta_{k,n})\Rightarrow \widehat T_\gamma(\eta_k) \ \ \text{as } n\to\infty.
$$
Clearly, as $\gamma\downarrow0$,
$$
\widehat T_\gamma(\eta_k) =\sum_{i=1}^M Y_i \one \{ Y_i \ge \gamma \} \to \sum_{i=1}^M Y_i,  \ \ \ \text{a.s.}
$$
Furthermore,
$$
\sum_{i=1}^M Y_i \stackrel{d}{=}  \sum_{i=1}^m a_i \int_{(\bbr^d)^{k+1}} h_{s_i}(0,\by)h_{t_i}(0,\by) M_k (d\by).
$$
One can prove this by computing the Laplace transforms of both sides. On the one hand, Theorem 5.1 in \cite{resnick:2007} demonstrates that, for every $\lambda >0$,
\begin{align}
&\E \Big[ \exp \Big\{ -\lambda \sum_{i=1}^m a_i \int_{(\bbr^d)^{k+1}} h_{s_i}(0,\by)h_{t_i}(0,\by) M_k (d\by) \Big\} \Big] \notag\\
&= \exp \Big\{ -C_k \int_{(\bbr^d)^{k+1}} \Big( 1-e^{-\lambda \sum_{i=1}^m a_i h_{s_i}(0,\by)h_{t_i}(0,\by)} \Big)d\by \Big\}. \label{e:Laplace.transf}
\end{align}
On the other hand, it is a simple exercise to check that the Laplace transform of $\sum_{i=1}^MY_i$ is equal to \eqref{e:Laplace.transf}.

Now, \eqref{e:first.term.asym} follows if one can show that, for every $\epsilon>0$,
$$
\lim_{\gamma\downarrow0}\limsup_{n\to\infty} \P\bigg( \sum_{i=1}^m a_i \sum_{\substack{\Y\subset \X_n, \\ |\Y|=k+2}} h_{r_ns_i}(\Y)h_{r_n t_i}(\Y)\, \one \Big\{  \sum_{i=1}^m a_i h_{r_ns_i}(\Y)h_{r_n t_i}(\Y) \le \gamma \Big\} \ge \epsilon\bigg)=0.
$$
By Markov's inequality and the customary change of variables as in \eqref{e:one.side.unif.conv}, as well as $n^{k+2}r_n^{d(k+1)}=1$, we have that
\begin{align*}
&\P\bigg( \sum_{i=1}^m a_i \sum_{\substack{\Y\subset \X_n, \\ |\Y|=k+2}} h_{r_ns_i}(\Y)h_{r_n t_i}(\Y)\, \one \Big\{  \sum_{i=1}^m a_i  h_{r_ns_i}(\Y)h_{r_n t_i}(\Y) \le \gamma \Big\} \ge \epsilon\bigg) \\
&\le \frac{1}{\epsilon}\sum_{i=1}^m a_i \binom{n}{k+2} \int_{(\bbr^d)^{k+2}} h_{r_ns_i}(x_1,\dots,x_{k+2})h_{r_n t_i}(x_1,\dots,x_{k+2}) \\
&\qquad \qquad \qquad\qquad \qquad \times \one \Big\{ \sum_{i=1}^m a_i h_{r_ns_i}(x_1,\dots,x_{k+2})h_{r_n t_i}(x_1,\dots,x_{k+2}) \le \gamma \Big\} \prod_{i=1}^{k+2}f(x_i)d\bx \\
&\le \frac{\|f\|_\infty^{k+1}}{\epsilon(k+2)!}\sum_{i=1}^m a_i \int_{(\bbr^d)^{k+1}} h_{s_i}(0,\by)h_{t_i}(0,\by) \one \Big\{ \sum_{i=1}^m a_i h_{s_i}(0,\by)h_{t_i}(0,\by) \le \gamma \Big\}d\by.
\end{align*}
The last term converges to $0$ as $\gamma \downarrow 0$ by the dominated convergence theorem, and we have obtained \eqref{e:first.term.asym2}.

From the argument thus far, the entire proof can be completed, provided that, for all $0\le s \le t \le \infty$,
\begin{equation}  \label{e:Betti.conv.in.prob}
\betakn(s,t) - G_{k,n}(s,t) \stackrel{p}{\to} 0, \ \ \ n\to\infty.
\end{equation}
It follows from Lemma \ref{l:Betti.bounds} that
\begin{align}
&\big| \, \betakn(s,t) - G_{k,n}(s,t) \, \big|  \label{e:diff.Betti.Gkn} \\
&\le G_{k,n}(s,t)  - \sum_{\Y\subset \X_n, \, |\Y|=k+2} g_{r_ns, r_nt} (\Y, \X_n) + \binom{k+3}{k+1} L_{r_nt}  \notag \\
&\le \sum_{\Y\subset \X_n, \, |\Y|=k+2} \hspace{-10pt} \one \big\{ \C(\Y, r_nt) \text{ is connected} \big\} \notag \\
&\qquad \qquad \quad  \times \one \big\{ \| y-z \| \le r_nt \text{ for some } y\in \Y \text{ and } z\in \X_n \setminus \Y \big\} + \binom{k+3}{k+1} L_{r_nt} \notag \\
&\le (k+3)L_{r_nt} + \binom{k+3}{k+1} L_{r_nt}. \notag
\end{align}
By \eqref{e:conv.Lr} in Lemma \ref{l:Betti.bounds}, we have that $\E[L_{r_nt}]\to $ as $n\to\infty$, and therefore, \eqref{e:Betti.conv.in.prob} follows as required.

\subsection{Proof of Theorem \ref{t:divergence.regime}}  \label{sec:proof.divergence.regime}
In the course of our proof, $C>0$ denotes a generic positive constant, which is independent of $n$ but may change line by line.
For the brevity of the proof, some of the technical results and arguments have been moved to the Appendix.

It is known from Corollary A.3 in \cite{hiraoka:shirai:trinh:2018} that for every $\mu\in M_+(\DelL)$, $\A$ contains a countable \emph{convergence-determining class} for $\mu$; that is, there exists a countable subset $\A' \subset \A$ such that $\mu (\partial A)=0$ for every $A\in \A'$, and for any $(\mu_n)\subset M_+(\DelL)$,
$$
\mu_n(A)\to\mu(A) \ \ \text{for all } A\in \A',
$$
implies $\mu_n \stackrel{v}{\to}\mu$ in $M_+(\DelL)$. Combining these properties with Proposition 3.4 in \cite{hiraoka:shirai:trinh:2018}, one can immediately obtain \eqref{e:vague.as.conv} and \eqref{e:exp.vague.conv} as a direct consequence of
$$
\frac{\xi_{k,n}(\genR)}{\seq}  \to C_k \int_{(\bbr^d)^{k+1}}\genH (0,\by) d\by, \ \ \ n\to\infty, \ \ \text{a.s.},
$$
and
$$
\frac{\E\big[\xi_{k,n}(\genR)\big]}{\seq}  \to C_k \int_{(\bbr^d)^{k+1}}\genH (0,\by) d\by, \ \ \ n\to\infty,
$$
for every $\genR\in \A$, where $\by=(y_1,\dots,y_{k+1})\in (\bbr^d)^{k+1}$. 

Since \eqref{e:PD-persistent.Betti} implies that the marginal distributions of $\xi_{k,n}$ are expressed as a linear combination of the persistent Betti numbers, it suffices to show that, for all $0 \le s \le t \le \infty$,
\begin{equation}  \label{e:SLLN.Betti}
\frac{\betakn(s,t)}{\seq} \to A_k(s,t), \ \ \ n\to\infty, \ \ \text{a.s.},
\end{equation}
and
\begin{equation}  \label{e:exp.Betti.conv}
\frac{\E\big[\betakn(s,t)\big]}{\seq} \to A_k(s,t), \ \ \ n\to\infty,
\end{equation}
where
\begin{equation}  \label{e:def.Akst}
A_k(s,t) = C_k \int_{(\bbr^d)^{k+1}} h_s(0,\by)h_t(0,\by)d\by.
\end{equation}
For the proof of \eqref{e:SLLN.Betti}, we first establish the 
SLLN for $G_{k,n}(s,t)$, i.e.,
\begin{equation}  \label{e:SLLN.Gkn}
\frac{G_{k,n}(s,t)}{\seq} \to A_k(s,t), \ \ \ n\to\infty, \ \ \text{a.s.},
\end{equation}
where  $G_{k,n}(s,t)$ is defined at \eqref{e:def.Gknst}.
Subsequently, it is demonstrated that the difference between $G_{k,n}(s,t)$ and $\betakn(s,t)$ with both scaled by $\seq$ is almost surely negligible as $n\to\infty$.  
In order to handle the potentially  slow growth rate (e.g., logarithmic growth rate) of the scaler for $G_{k,n}(s,t)$, our idea is to employ the concentration bounds given in \cite{bachmann:reitzner:2018} (see Proposition \ref{p:concentration} of the Appendix). The other key machinery for our proof is to detect some subsequential bounds for $G_{k,n}(s,t)$. 

Now, we begin with some preliminary work. For each $n\ge 1$, let $N_n$ be a Poisson random variable with mean $n$. Assume that $N_n$ is independent of $(X_i)$. We then define
$$
\Pn=\begin{cases}
\{X_1,\dots,X_{N_n}\}, &  N_n \ge 1, \\
\emptyset, & N_n = 0,
\end{cases}
$$
to be a Poisson point process with intensity $nf$. Additionally, define $v_m=\lfloor e^{m^\gamma}\rfloor$, $m=0,1,2,\dots$ for some $\gamma\in (0,1)$, and let
\begin{equation}  \label{e:def.pm.qm}
p_m = \text{argmax}\{ v_m \le \ell \le v_{m+1}: r_\ell \}, \ \ \  q_m = \text{argmin}\{ v_m \le \ell \le v_{m+1}: r_\ell \}.
\end{equation}
Then, for every $n\ge 1$, there exists a unique $m = m(n)\in \bbn$ such that $v_m \le n < v_{m+1}$.
Moreover, we decompose the indicator \eqref{e:def.hr} by
\begin{align}
h_r(x_1,\dots,x_{k+2}) &= \one \Big\{  \bigcap_{j=1, \, j \neq j_0}^{k+2}B(x_j, r/2) \neq \emptyset \text{ for all } j_0 \in \{ 1,\dots,k+2 \} \Big\}  - \one  \Big\{  \bigcap_{j=1}^{k+2}B(x_j, r/2) \neq \emptyset \Big\} \notag \\
&=: h_r^{(+)}(x_1,\dots,x_{k+2}) - h_r^{(-)}(x_1,\dots,x_{k+2}).  \label{e:def.hrpm}
\end{align}
Then, $h_r^{(\pm)}(x_1,\dots, x_{k+2})$ are both non-decreasing in $r$ for all $x_i$'s, $i=1,\dots,m$. Using these indicator functions, $G_k(s,t)$ can be split into four terms:
\begin{align*}
G_{k,n}(s,t) &= \sum_{\Y\subset \X_n, \, |\Y|=k+2} \big[ h_{r_ns}^{(+)}(\Y)h_{r_nt}^{(+)}(\Y) - h_{r_ns}^{(+)}(\Y)h_{r_nt}^{(-)}(\Y) - h_{r_ns}^{(-)}(\Y)h_{r_nt}^{(+)}(\Y) + h_{r_ns}^{(-)}(\Y)h_{r_nt}^{(-)}(\Y)\big] \\
&=: G_{k,n}^{(+,+)}(s,t) - G_{k,n}^{(+,-)}(s,t) - G_{k,n}^{(-,+)}(s,t) + G_{k,n}^{(-,-)}(s,t).
\end{align*}
From this decomposition, it is sufficient to prove that
$$
\frac{G_{k,n}^{(\pm,\pm)}(s,t)}{\seq} \to A_k^{(\pm,\pm)}(s,t):=C_k \int_{(\bbr^d)^{k+1}} h_s^{(\pm)}(0,\by)h_t^{(\pm)}(0,\by)d\by, \ \ \ n\to\infty, \ \ \text{a.s.}
$$
In order to avoid repetition of the same arguments, we prove the SLLN only for $G_{k,n}^{(+,+)}(s,t)$. For ease of notation, we henceforth drop the superscript $(+,+)$ from $G_{k,n}^{(+,+)}(s,t)$ and $A_k^{(+,+)}(s,t)$, while omitting $(+)$ from the indicators in the  limit.

By the monotonicity of indicators in \eqref{e:def.hrpm}, we can bound $G_{k,n}(s,t)$ by
\begin{align*}
G_{k,n}(s,t) &\le \sum_{\Y\subset \X_{v_{m+1}}, \, |\Y|=k+2} h_{r_{p_m}s}(\Y)h_{r_{p_m}t}(\Y).
\end{align*}
Noting that $\seq \ge v_m^{k+2}r_{q_m}^{d(k+1)}$, we have for every $n\ge 1$,
\begin{equation}  \label{e:upper.bound.Gkn}
\frac{G_{k,n}(s,t)}{\seq} \le \frac{T_m^{\uparrow}(s,t)}{v_m^{k+2}r_{q_m}^{d(k+1)}} + \big(v_m^{k+2}r_{q_m}^{d(k+1)}\big)^{-1} \Big[  \sum_{\Y\subset \X_{v_{m+1}}, \, |\Y|=k+2} h_{r_{p_m}s}(\Y)h_{r_{p_m}t}(\Y) - T_m^{\uparrow}(s,t) \Big],
\end{equation}
where
\begin{equation}  \label{e:def.Tm.uparrow}
T_m^{\uparrow}(s,t) = \sum_{\Y\subset \mathcal P_{v_{m+1}}, \, |\Y|=k+2} h_{r_{p_m}s}(\Y)h_{r_{p_m}t}(\Y).
\end{equation}

Similarly, $G_{k,n}(s,t)$ can be bounded below by
\begin{equation}  \label{e:lower.bound.Gkn}
\frac{G_{k,n}(s,t)}{\seq} \ge \frac{T_m^{\downarrow}(s,t)}{v_{m+1}^{k+2}r_{p_m}^{d(k+1)}} + \big(v_{m+1}^{k+2}r_{p_m}^{d(k+1)}\big)^{-1} \Big[  \sum_{\Y\subset \X_{v_{m}}, \, |\Y|=k+2} h_{r_{q_m}s}(\Y)h_{r_{q_m}t}(\Y) - T_m^{\downarrow}(s,t) \Big],
\end{equation}
where
\begin{equation}  \label{e:def.Tm.downarrow}
T_m^{\downarrow}(s,t) = \sum_{\Y\subset \mathcal P_{v_{m}}, \, |\Y|=k+2} h_{r_{q_m}s}(\Y)h_{r_{q_m}t}(\Y).
\end{equation}
It follows from \eqref{e:upper.bound.Gkn}, \eqref{e:lower.bound.Gkn}, and Lemma \ref{l:diff.Poisson.binomial} in the Appendix that
\begin{align*}
\liminf_{m\to\infty} \frac{T_m^{\downarrow}(s,t)}{v_{m+1}^{k+2}r_{p_m}^{d(k+1)}} &\le \liminf_{n\to\infty} \frac{G_{k,n}(s,t)}{\seq} \le \limsup_{n\to\infty} \frac{G_{k,n}(s,t)}{\seq} \le \limsup_{m\to\infty} \frac{T_m^{\uparrow}(s,t)}{v_m^{k+2}r_{q_m}^{d(k+1)}}, \ \ \text{a.s.}
\end{align*}
Therefore, the required SLLN for $G_{k,n}(s,t)$ will follow, if we can prove that for every $\epsilon>0$
\begin{align*}
&\limsup_{m\to\infty} \frac{T_m^{\uparrow}(s,t)}{v_m^{k+2}r_{q_m}^{d(k+1)}} \le (1+\epsilon)^2 A_k(s,t), \ \ \text{a.s.}, \\
&\liminf_{m\to\infty} \frac{T_m^{\downarrow}(s,t)}{v_{m+1}^{k+2}r_{p_m}^{d(k+1)}} \ge (1+\epsilon)^{-2} A_k(s,t), \ \ \text{a.s.}
\end{align*}
By the Borel--Cantelli lemma, it suffices to demonstrate that
\begin{align}
&\sum_{m=1}^\infty \P \big( T_m^{\uparrow}(s,t) >(1+\epsilon)^2 A_k(s,t) v_m^{k+2} r_{q_m}^{d(k+1)} \big) < \infty, \label{e:BC.lemma1}\\
&\sum_{m=1}^\infty \P \big( T_m^{\downarrow}(s,t) <(1+\epsilon)^{-2} A_k(s,t) v_{m+1}^{k+2} r_{p_m}^{d(k+1)} \big) < \infty. \label{e:BC.lemma2}
\end{align}
By virtue of Lemma \ref{l:moments} in the Appendix, there exists $N\in\bbn$ such that for all $m\ge N$
$$
\E\big[ T_m^{\uparrow}(s,t) \big] \le (1+\epsilon)A_k(s,t)v_m^{k+2}r_{q_m}^{d(k+1)}.
$$
To utilize the concentration inequality in Proposition \ref{p:concentration} of the Appendix, we note that $\mathcal P_{v_{m+1}}$ is a Poisson point process with finite intensity measure $v_{m+1}f$. Furthermore, $T_m^\uparrow(s,t)$ is a Poisson $U$-statistics of order $k+2$, which satisfies \eqref{e:cond.s1} and \eqref{e:cond.s2}.
Now, we can apply \eqref{e:concentration1} to obtain that
\begin{align*}
&\P \big( T_m^{\uparrow}(s,t) >(1+\epsilon)^2 A_k(s,t) v_m^{k+2} r_{q_m}^{d(k+1)} \big) \\
&\le \P \Big( T_m^{\uparrow}(s,t) - \E\big[T_m^{\uparrow}(s,t)\big] >\epsilon(1+\epsilon) A_k(s,t) v_m^{k+2} r_{q_m}^{d(k+1)} \Big) \\
&\le \exp \bigg\{  -C \Big[  \Big(  \E[T_m^\uparrow (s,t)] + \epsilon(1+\epsilon) A_k(s,t) v_m^{k+2} r_{q_m}^{d(k+1)} \Big)^{1/(2k+4)}  - \Big(  \E[T_m^\uparrow (s,t)] \Big)^{1/(2k+4)} \Big]^2 \bigg\} \\
&\le \exp \Big\{ -C(v_m^{k+2}r_{q_m}^{d(k+1)})^{1/(k+2)} \Big\},
\end{align*}
where the last inequality follows from \eqref{e:moment.Tm.uparrow} in Lemma \ref{l:moments}.
Subsequently, applying Lemma \ref{l:RV.seq} and \eqref{e:faster.than.log}, we have that
$$
\exp \Big\{ -C(v_m^{k+2}r_{q_m}^{d(k+1)})^{1/(k+2)} \Big\} \le \exp \Big\{ -C(v_m^{k+2}r_{v_m}^{d(k+1)})^{1/(k+2)} \Big\} \le \exp\big\{   -Cm^{\gamma \eta /(k+2)}\big\}.
$$
Clearly, the last term is summable with respect to $m$, and therefore, we have obtained \eqref{e:BC.lemma1}.

Turning to condition \eqref{e:BC.lemma2}, note that $T_m^\downarrow(s,t)$ is again a Poisson $U$-statistics of order $k+2$.
Lemma \ref{l:moments} demonstrates that there exists $N'\in\bbn$, so that for all $m\ge N'$,
$$
\E\big[ T_m^\downarrow (s,t) \big] \ge (1+\epsilon)^{-1} A_k(s,t) v_{m+1}^{k+2} r_{p_m}^{d(k+1)},
$$
and, by \eqref{e:concentration2} in Proposition \ref{p:concentration},
\begin{align*}
&\P \big( T_m^{\downarrow}(s,t) <(1+\epsilon)^{-2} A_k(s,t) v_{m+1}^{k+2} r_{p_m}^{d(k+1)} \big) \\
&\le \P \Big( T_m^{\downarrow}(s,t) -\E\big[ T_m^{\downarrow}(s,t) \big] < -\epsilon(1+\epsilon)^{-2} A_k(s,t)   v_{m+1}^{k+2} r_{p_m}^{d(k+1)} \Big)\\
&\le \exp \bigg\{ - \frac{C\big( \epsilon(1+\epsilon)^{-2}A_k(s,t) v_{m+1}^{k+2}r_{p_m}^{d(k+1)} \big)^2}{\text{Var} \big( T_m^\downarrow(s,t) \big)} \bigg\}. 
\end{align*}
Combining \eqref{e:2nd.moment.Tm.downarrow} in Lemma \ref{l:moments}, Lemma \ref{l:RV.seq}, and \eqref{e:faster.than.log}, we have that
$$
\frac{\big( v_{m+1}^{k+2}r_{p_m}^{d(k+1)} \big)^2}{\text{Var}\big( T_m^\downarrow(s,t) \big)} \ge C v_{m+1}^{k+2}r_{p_m}^{d(k+1)} \ge C v_m^{k+2}r_{v_m}^{d(k+1)} \ge Cm^{\gamma \eta}.
$$
This concludes that
$$
\P \big( T_m^{\downarrow}(s,t) <(1+\epsilon)^{-2} A_k(s,t) v_{m+1}^{k+2} r_{p_m}^{d(k+1)} \big) \le \exp \{ -Cm^{\gamma \eta} \}.
$$
The right hand side above is summable with respect to $m$, and therefore, \eqref{e:BC.lemma2} has been established.

Having obtained \eqref{e:SLLN.Gkn}, our next goal is to show that
\begin{equation}  \label{e:difference.vanish.as}
(\seq)^{-1} \big|\, \betakn(s,t)-G_{k,n}(s,t)\, \big| \to 0, \ \ n\to\infty, \ \ \text{a.s.}
\end{equation}
It follows from \eqref{e:diff.Betti.Gkn} that one can bound \eqref{e:difference.vanish.as} by a constant multiple of $(\seq)^{-1}L_{r_nt}$, which itself is further bounded as follows.
\begin{align}
\frac{L_{r_nt}}{\seq} &\le \frac{1}{v_m^{k+2} r_{q_m}^{d(k+1)}} \sum_{\substack{\Y \subset \X_{v_{m+1}}, \\ |\Y|=k+3}} \one \big\{ \C(\Y, r_{p_m}t) \text{ is connected} \big\}  \notag\\
&= \frac{V_{m}(t)}{v_m^{k+2} r_{q_m}^{d(k+1)}} + \big( v_m^{k+2} r_{q_m}^{d(k+1)} \big)^{-1} \Big\{ \sum_{\substack{\Y \subset \X_{v_{m+1}}, \\ |\Y|=k+3}} \one \big\{ \C(\Y, r_{p_m}t) \text{ is connected} \big\} -V_{m}(t) \Big\}, \label{e:Vmk.vanish}
\end{align}
where
\begin{equation}  \label{e:def.Vm}
V_{m}(t) = \sum_{\substack{\Y \subset \mathcal P_{v_{m+1}}, \\ |\Y|=k+3}} \one \big\{ \C(\Y, r_{p_m}t) \text{ is connected} \big\}.
\end{equation}
For the first inequality in \eqref{e:Vmk.vanish}, we have used 
the fact that $\one \big\{  \C(\Y,r) \text{ is connected} \big\}$ is non-decreasing in $r$.
Because of Lemma \ref{l:diff.Poisson.binomial}, it now remains to show that
\begin{equation}  \label{e:Vmk.as.0}
\frac{V_{m}(t)}{v_m^{k+2} r_{q_m}^{d(k+1)}} \to 0, \ \ \ m\to\infty, \ \ \text{a.s.}
\end{equation}
By \eqref{e:moment.Vm} in Lemma \ref{l:moments}, Lemma \ref{l:RV.seq}, and $nr_n^d\to 0$, $n\to\infty$,
\begin{align}
\E \big[ V_{m}(t)\big] &\sim \frac{v_{m}^{k+3} r_{q_m}^{d(k+2)}}{(k+3)!}\, \int_{\bbr^d}f(x)^{k+3}dx\int_{(\bbr^d)^{k+2}} \one \big\{ \C(\{ 0,\by \}, t) \text{ is connected} \big\}d\by  \label{e:asym.EVmk} \\
&= o \big( v_m^{k+2} r_{q_m}^{d(k+1)} \big), \ \ \ m\to\infty,  \notag
\end{align}
where $\by=(y_1,\dots,y_{k+2})\in (\bbr^d)^{k+2}$. 
Since $V_m(t)$ is a Poisson $U$-statistics of order $k+3$ satisfying \eqref{e:cond.s1} and \eqref{e:cond.s2}, one can again employ the concentration inequality in Proposition \ref{p:concentration}. Indeed, for every $\epsilon>0$ and sufficiently large $m$, we have that
\begin{align*}
&\P \big( V_{m}(t) > \epsilon v_m^{k+2} r_{q_m}^{d(k+1)} \big) \\
&\le \P \Big( V_{m}(t) - \E\big[ V_{m}(t) \big] > \frac{\epsilon}{2}\, v_m^{k+2} r_{q_m}^{d(k+1)}  \Big) \\
&\le \exp \bigg\{ -C\Big[ \Big( \E\big[V_{m}(t) \big] + \frac{\epsilon}{2}\, v_m^{k+2} r_{q_m}^{d(k+1)}  \Big)^{1/(2k+6)} - \Big( \E\big[ V_{m}(t) \big] \Big)^{1/(2k+6)} \Big]^2 \bigg\}.
\end{align*}
By \eqref{e:asym.EVmk}, Lemma \ref{l:RV.seq}, and \eqref{e:faster.than.log},
\begin{align*}
&\bigg[ \Big( \E\big[V_{m}(t) \big] + \frac{\epsilon}{2}\, v_m^{k+2} r_{q_m}^{d(k+1)}  \Big)^{1/(2k+6)} - \Big( \E\big[ V_{m}(t) \big] \Big)^{1/(2k+6)} \bigg]^2  \\
&\ge C\big(  v_m^{k+2} r_{q_m}^{d(k+1)}  \big)^{1/(k+3)} \ge C \big(  v_m^{k+2} r_{v_m}^{d(k+1)}  \big)^{1/(k+3)} \ge Cm^{\gamma \eta/(k+3)}.
\end{align*}
Since $\sum_m e^{-Cm^{\gamma \eta /(k+3)}} < \infty$, the Borel--Cantelli lemma completes the proof of \eqref{e:Vmk.as.0}. Now, we have obtained \eqref{e:SLLN.Betti}.

Finally, we move on to the proof of \eqref{e:exp.Betti.conv}. By \eqref{e:diff.Betti.Gkn} and \eqref{e:conv.Lr} in Lemma \ref{l:Betti.bounds}, it suffices to show that
$$
\frac{\E \big[ G_{k,n}(s,t) \big]}{\seq} \to A_k(s,t),  \ \ \text{as } n\to\infty.
$$
This is, however, not difficult to prove by the customary change of variables, together with the dominated convergence theorem.

\subsection{Proof of Theorem \ref{t:vanishing.regime}}  \label{sec:proof.vanishing}

Given $H_1, H_2\in C_K^+(\DelL)$ and $\epsilon_1, \epsilon_2>0$, define $F_{H_1,H_2,\epsilon_1,\epsilon_2}: M_p(\DelL)\to [0,\infty)$ by
$$
F_{H_1,H_2,\epsilon_1,\epsilon_2}(\xi) = \big( 1-e^{-(\xi(H_1)-\epsilon_1)_+} \big)\big( 1-e^{-(\xi(H_2)-\epsilon_2)_+} \big),
$$
where $\xi(H_i) = \int_{\DelL} H_i(x)\xi(dx)$ and
$(a)_+=a$ if $a\ge 0$ and $0$ otherwise. It is straightforward to check that $F_{H_1,H_2,\epsilon_1,\epsilon_2}\in \mathcal C_0$. Denote
\begin{align*}
\eta_n(\cdot) &:= (n^{k+2}r_n^{d(k+1)})^{-1} \P (\xi_{k,n}\in \cdot), \\
\eta(\cdot) &:= C_k \lambda \Big\{ \by\in (\bbr^d)^{k+1}: \delta_{( b(0,\by), d(0,\by))} \in \cdot \Big\}.
\end{align*}
According to Theorem A.2 in \cite{hult:samorodnitsky:2010}, the proof will be complete if one can show that
$$
\eta_n(F_{H_1,H_2,\epsilon_1,\epsilon_2}) \to \eta(F_{H_1,H_2,\epsilon_1,\epsilon_2}), \ \ \ n\to\infty,
$$
for all $H_1, H_2 \in C_K^+(\DelL)$ and $\epsilon_1, \epsilon_2 > 0$. Let
$$
\Delta_n = \prod_{\ell=1}^2 \bigg( 1-\exp \Big\{ -  \Big( \sum_{(b_i,d_i)\in D_{k,n}} H_\ell(b_i, d_i) - \epsilon_\ell\Big)_+ \Big\}\bigg);
$$
then,
$$
\eta_n(F_{H_1,H_2,\epsilon_1,\epsilon_2}) = (\seq)^{-1} \E[\Delta_n].
$$
For each $\ell=1,2$, $H_\ell$ has compact support in $\DelL$; hence, there exists $0<T<\infty$ such that
\begin{equation}  \label{e:support.cond}
\bigcup_{\ell=1}^2 \text{supp}(H_\ell) \subset \big( [0,T]\times (0,\infty] \big) \cap \DelL
\end{equation}
(supp$(H_\ell)$ denotes the support of $H_\ell$).

For ease of the description below, let us introduce some shorthand notations. Denote a collection of ordered $m$-tuples by
$$
\I_m := \big\{\bi=(i_1,\dots,i_m)\in \bbn_+^m: 1\le i_1 < \cdots <i_m \le n  \big\}.
$$
Write also
$$
\X_\bi = (X_{i_1}, \dots, X_{i_m}),  \ \ \ \bi = (i_1,\dots,i_m)\in \I_m.
$$
Then,
$$
\eta_n(F_{H_1,H_2,\epsilon_1,\epsilon_2}) = (\seq)^{-1} \E[\Delta_n \one_{U_n}] + (\seq)^{-1} \E[\Delta_n \one_{U_n^c}] := A_n + B_n,
$$
where
$$
U_n = \bigcap_{\bi\in \I_{k+3}} \big\{  \C(\X_\bi, r_nT) \text{ is not connected} \big\}.
$$
We first show that $B_n$ tends to $0$ as $n\to\infty$. Since $\Delta_n\le1$,
\begin{align*}
B_n &\le (\seq)^{-1} \P \Big( \bigcup_{\bi \in \I_{k+3}} \big\{ \C(\X_\bi, r_nT) \text{ is connected} \big\} \Big) \\
&\le (\seq)^{-1} \binom{n}{k+3}\int_{(\bbr^d)^{k+3}} \one \Big\{ \C\big(\{  x_1,\dots,x_{k+3} \}, r_nT\big) \text{ is connected} \Big\} \prod_{i=1}^{k+3}f(x_i) d\bx.
\end{align*}
By the change of variables, $x_i=x+r_ny_{i-1}$, $i=1,\dots,k+3$ (with $y_0\equiv 0$), the translation invariance of \eqref{e:def.hr}, and $\|f\|_\infty <\infty$,
\begin{align}
B_n &\le  (\seq)^{-1} \binom{n}{k+3}r_n^{d(k+2)} \int_{\bbr^d} \int_{(\bbr^d)^{k+2}} \one \Big\{ \C\big(\{  0,\by \}, T\big) \text{ is connected} \Big\} \label{e:change.variable.An}  \\
&\qquad \qquad \qquad \qquad \qquad \qquad\qquad \qquad \qquad \times f(x)\prod_{i=1}^{k+2}f(x+r_ny_i)d\by dx \notag \\
&\le \frac{\|f\|_\infty^{k+2}}{(k+3)!}\, nr_n^d\int_{(\bbr^d)^{k+2}} \one \Big\{ \C\big(\{  0,\by \}, T\big) \text{ is connected} \Big\} d\by \to 0, \ \ \ n\to\infty, \notag
\end{align}
where $\by=(y_1,\dots,y_{k+2})\in (\bbr^d)^{k+2}$. 
For the asymptotics of $A_n$, one can rewrite it as
\begin{equation}  \label{e:revised.An}
A_n = (\seq)^{-1}\E\bigg[ \Delta_n \one \Big\{ U_n \cap \bigcup_{\bi \in \Ik} \big\{ \sup_{0\le t \le T} h_{r_nt} (\X_\bi)=1 \big\} \Big\}  \bigg].
\end{equation}
To see this, suppose that $\supt h_{r_nt} (\X_\bi)=0$ for all $\bi\in \Ik$ and $\C(\X_\bi, r_nT)$ is disconnected for every $\bi\in \I_{k+3}$; then, it must be that the point process $\sum_{(b_i,d_i)\in D_{k,n}, \, b_i \le T}\delta_{(b_i,d_i)}$ becomes the null measure. Therefore, by \eqref{e:support.cond}, $\sum_{(b_i,d_i)\in D_{k,n}} H_\ell(b_i,d_i) = 0$ for each $\ell=1,2$, and thus, $\Delta_n=0$.

Next, we divide a newly added portion in \eqref{e:revised.An} into two parts:
\begin{align*}
\bigcup_{\bi \in \Ik} \big\{ \sup_{0\le t \le T} h_{r_nt} (\X_\bi)=1 \big\} &= \bigcup_{\bi\in \Ik} \big\{  \sup_{0\le t \le T} h_{r_nt} (\X_\bi) = 1, \\
&\qquad \qquad \qquad \supt h_{r_nt}(\X_{\bj})=0 \text{ for all } \bj \in \Ik \text{ with } \bj\neq \bi \big\} \\
&\qquad \cup \bigcup_{\bi\in \Ik} \bigcup_{\bj\in \Ik, \, \bj\neq \bi} \big\{ \sup_{0\le t \le T} h_{r_nt} (\X_\bi) = \sup_{0\le t \le T} h_{r_nt} (\X_{\bj}) = 1 \big\} \\
&=: V_n \cup W_n.
\end{align*}
Note that $V_n \cap W_n =\emptyset$. Accordingly, we can write
\begin{equation}  \label{e:Cn.Dn}
A_n = (\seq)^{-1} \E\big[ \Delta_n \one_{U_n \cap V_n} \big] + (\seq)^{-1} \E\big[ \Delta_n \one_{U_n \cap W_n} \big] =: C_n + D_n.
\end{equation}

In the following, we show that $D_n$ is negligible as $n\to\infty$. In fact,
\begin{align*}
D_n &\le (\seq)^{-1} \P(W_n) \\
&\le (\seq)^{-1} \sum_{\ell=0}^{k+1} \sum_{\bi\in \Ik} \sum_{\bj\in \Ik, \, |\bi\cap \bj|=\ell} \P \Big( \supt h_{r_nt} (\X_\bi) = \supt h_{r_nt} (\X_\bj) =1 \Big) \\
&= (\seq)^{-1} \binom{n}{k+2}\binom{n-k-2}{k+2} \P \Big( \supt h_{r_nt} (X_1,\dots,X_{k+2})=1 \Big)^2 \\
&+ (\seq)^{-1} \sum_{\ell=1}^{k+1}\binom{n}{k+2} \binom{k+2}{\ell} \binom{n-k-2}{k+2-\ell} \\
&\qquad \qquad \quad  \times\P\Big(  \supt h_{r_nt} (X_1,\dots,X_{k+2}) =\supt h_{r_nt}(X_1,\dots,X_\ell, X_{k+3}, \dots, X_{2k+4-\ell})=1 \Big) \\
&\le  (\seq)^{-1}n^{2k+4} \Big( \int_{(\bbr^d)^{k+2}} \supt h_{r_nt}(x_1,\dots,x_{k+2})\prod_{i=1}^{k+2}f(x_i)d\bx \Big)^2 \\
&+ \sum_{\ell=1}^{k+1} (\seq)^{-1} n^{2k+4-\ell} \int_{(\bbr^d)^{2k+4-\ell}} \supt h_{r_nt}(x_1,\dots,x_{k+2}) \\
&\qquad \qquad \qquad \qquad\qquad \qquad \qquad  \quad \times \supt h_{r_nt} (x_1,\dots,x_\ell, x_{k+3}, \dots,x_{2k+4-\ell}) \prod_{i=1}^{2k+4-\ell}f(x_i)d\bx \\
&=: E_n+F_n.
\end{align*}
Performing the same change of variables as in \eqref{e:change.variable.An},
\begin{align*}
E_n &= (\seq)^{-1} n^{2k+4} \Big( r_n^{d(k+1)} \int_{\bbr^d} \int_{(\bbr^d)^{k+1}} \supt h_t(0,\by) f(x)\prod_{i=1}^{k+1} f(x+r_ny_i) d\by dx \Big)^2 \\
&\le \seq \| f \|_\infty^{2k+2} \Big( \int_{(\bbr^d)^{k+1}} \supt h_t(0,\by)d\by \Big)^2 \to 0,  \ \ \text{as } n\to\infty, 
\end{align*}
where $\by=(y_1,\dots,y_{k+1})\in (\bbr^d)^{k+1}$. 
Similarly,
\begin{align*}
F_n &= \sum_{\ell=1}^{k+1} (\seq)^{-1} n^{2k+4-\ell} r_n^{d(2k+3-\ell)} \int_{\bbr^d} \int_{(\bbr^d)^{2k+3-\ell}} \supt h_t (0,y_1,\dots,y_{k+1}) \\
&\qquad \qquad \times \supt h_t (0,y_1,\dots,y_{\ell-1}, y_{k+2}, \dots,y_{2k+3-\ell}) f(x)\prod_{i=1}^{2k+3-\ell} f(x+r_n y_i) d\by dx \\
&\le \sum_{\ell=1}^{k+1} \| f\|_\infty ^{2k+3-\ell} (nr_n^d)^{k+2-\ell}  \int_{(\bbr^d)^{2k+3-\ell}} \supt h_t (0,y_1,\dots,y_{k+1}) \\
&\qquad \qquad \qquad \qquad\qquad \times \supt h_t (0,y_1,\dots,y_{\ell-1}, y_{k+2}, \dots,y_{2k+3-\ell}) d\by\to 0, \ \ \text{as } n\to\infty.
\end{align*}
Hence, we have established $D_n\to0$ as $n\to\infty$. \\
Returning to \eqref{e:Cn.Dn} and noting that
$$
\bigg( \Big\{ \supt h_{r_nt}(\X_\bi) = 1, \ \supt h_{r_nt} (\X_{\bj}) = 0 \text{ for all } \bj\in \Ik \text{ with } \bj\neq \bi  \Big\}, \, \bi \in \Ik \bigg)
$$
are disjoint, we have
\begin{align}
C_n &= (\seq)^{-1} \sum_{\bi \in \Ik} \E \bigg[ \Delta_n \one \bigg\{ U_n \cap \Big\{  \supt h_{r_nt}(\X_\bi) =1, \  \label{e:Dn} \\
&\qquad \qquad \qquad \qquad \qquad \qquad \supt h_{r_nt}(\X_{\bj}) = 0 \text{ for all } \bj\in \Ik \text{ with } \bj \neq \bi \Big\} \bigg\} \bigg]. \notag
\end{align}
Suppose that all the conditions pertaining to the indicator function in \eqref{e:Dn} hold for some $\bi\in \Ik$. Then,
$$
\sum_{(b_i, d_i)\in D_{k,n}, \, b_i \le T} \delta_{(b_i,d_i)} = \delta_{( b(r_n^{-1}\X_\bi), d(r_n^{-1}\X_\bi) )},
$$
and hence, for each $\ell=1,2$,
$$
\sum_{(b_i, d_i)\in D_{k,n}} H_\ell (b_i,d_i) = H_\ell \big(  b(r_n^{-1}\X_\bi), d(r_n^{-1}\X_\bi) \big).
$$
This in turn implies that
\begin{align*}
C_n &= (\seq)^{-1} \sum_{\bi\in \Ik} \E \bigg[ \prod_{\ell=1}^2 \Big( 1-\exp \Big\{ -  \Big(  H_\ell\big( b(r_n^{-1}\X_\bi), d(r_n^{-1}\X_\bi)  \big) -\epsilon_\ell \Big)_+ \Big\} \Big) \\
&\qquad \qquad \quad \times \one \Big\{  U_n \cap \big\{  \supt h_{r_nt}(\X_\bi)=1, \supt h_{r_nt}(\X_{\bj})=0 \text{ for all } \bj\in \Ik \text{ with } \bj \neq \bi \big\}  \Big\} \bigg] \\
&= (\seq)^{-1} \sum_{\bi\in \Ik} \E \bigg[ \prod_{\ell=1}^2 \Big( 1-\exp \Big\{ -  \Big(  H_\ell\big( b(r_n^{-1}\X_\bi), d(r_n^{-1}\X_\bi)  \big) -\epsilon_\ell \Big)_+ \Big\} \Big) \\
&\qquad \qquad \qquad \qquad \qquad \quad \qquad \qquad \qquad \qquad \qquad \qquad\times \one \Big\{  U_n \cap \big\{ \supt h_{r_nt} (\X_\bi)=1  \big\} \Big\}\bigg] \\
&- (\seq)^{-1} \sum_{\bi\in \Ik} \E \bigg[ \prod_{\ell=1}^2 \Big( 1-\exp \Big\{ -  \Big(  H_\ell\big( b(r_n^{-1}\X_\bi), d(r_n^{-1}\X_\bi)  \big) -\epsilon_\ell \Big)_+ \Big\} \Big) \\
&\qquad \qquad \qquad \qquad \qquad \qquad \times \one \Big\{  U_n \cap \bigcup_{\bj\in \Ik, \, \bj\neq \bi} \big\{  \supt h_{r_nt}(\X_\bi)=\supt h_{r_nt}(\X_{\bj})=1\big\}  \Big\} \bigg] \\
&=: G_n-H_n.
\end{align*}
Of the last two terms, $H_n$ is  negligible as $n\to\infty$. Indeed, repeating the same argument as that for proving $D_n\to 0$ can yield that
$$
H_n \le  (\seq)^{-1} \sum_{\ell=0}^{k+1} \sum_{\bi\in \Ik} \sum_{\bj\in \Ik, \, |\bi\cap \bj|=\ell} \P \Big( \supt h_{r_nt} (\X_\bi) = \supt h_{r_nt} (\X_\bj) =1 \Big) \to 0, \ \ \ n\to\infty.
$$
Subsequently, note that, if $\supt h_{r_nt} (\X_\bi)=0$ for some $\bi\in \Ik$, then $b(r_n^{-1} \X_\bi)\ge T$, and therefore, $H_\ell\big( b(r_n^{-1}\X_\bi), d(r_n^{-1}\X_\bi) \big)=0$ for $\ell=1,2$. This implies that
\begin{align*}
G_n &= (\seq)^{-1} \sum_{\bi\in \Ik} \E \bigg[ \prod_{\ell=1}^2 \Big( 1-\exp \Big\{ -  \Big(  H_\ell\big( b(r_n^{-1}\X_\bi), d(r_n^{-1}\X_\bi)  \big) -\epsilon_\ell \Big)_+ \Big\} \Big) \one_{U_n} \bigg] \\
&= (\seq)^{-1} \sum_{\bi\in \Ik} \E \bigg[ \prod_{\ell=1}^2 \Big( 1-\exp \Big\{ -  \Big(  H_\ell\big( b(r_n^{-1}\X_\bi), d(r_n^{-1}\X_\bi)  \big) -\epsilon_\ell \Big)_+ \Big\} \Big) \bigg] \\
&-(\seq)^{-1} \sum_{\bi\in \Ik} \E \bigg[ \prod_{\ell=1}^2 \Big( 1-\exp \Big\{ -  \Big(  H_\ell\big( b(r_n^{-1}\X_\bi), d(r_n^{-1}\X_\bi)  \big) -\epsilon_\ell \Big)_+ \Big\} \Big) \one_{U_n^c} \bigg] \\
&=:I_n -J_n.
\end{align*}
Then,
\begin{align*}
J_n &\le (\seq)^{-1} \sum_{\bi\in\Ik} \E \Big[  \one \big\{  H_1\big( b(r_n^{-1}\X_\bi), d(r_n^{-1}\X_\bi) \big) \ge \epsilon_1 \big\} \times \one_{U_n^c}\Big]  \\
&\le (\seq)^{-1} \sum_{\bi\in\Ik} \E \Big[  \supt h_{r_nt} (\X_\bi)\, \one_{U_n^c} \Big] \\
&\le  (\seq)^{-1} \sum_{\ell=0}^{k+2} \sum_{\bi\in\Ik} \sum_{\bj \in \I_{k+3}, \, |\bi\cap \bj| = \ell}  \E \Big[ \supt h_{r_nt} (\X_\bi)\, \one \big\{  \C(\X_\bj, r_nT) \text{ is connected} \big\} \Big] \\
&= (\seq)^{-1}  \binom{n}{k+2}\binom{n-k-2}{k+3} \E \Big[ \supt h_{r_nt} (X_1,\dots,X_{k+2}) \Big]\\
&\qquad \qquad \qquad \qquad \qquad \qquad \qquad \qquad \qquad \qquad \times \P\Big( \C\big(\{ X_1,\dots,X_{k+3} \}, r_nT\big) \text{ is connected} \Big) \\
&+ (\seq)^{-1} \sum_{\ell=1}^{k+2}\binom{n}{k+2} \binom{k+2}{\ell} \binom{n-k-2}{k+3-\ell} \E \Big[  \supt h_{r_nt} (X_1,\dots,X_{k+2})  \\
&\qquad \qquad \qquad \qquad \qquad \qquad \qquad \times \one \Big\{ \C\big(\{ X_1,\dots,X_\ell, X_{k+3}, \dots, X_{2k+5-\ell} \}, r_nT\big) \text{ is connected}  \Big\} \Big] \\
&\le (\seq)^{-1} n^{2k+5} \int_{(\bbr^d)^{k+2}} \supt h_{r_nt} (x_1,\dots,x_{k+2}) \prod_{i=1}^{k+2} f(x_i) d\bx \\
&\qquad \qquad \qquad\qquad \qquad \times \int_{(\bbr^d)^{k+3}} \one \Big\{ \C \big( \{ x_1,\dots,x_{k+3} \}, r_nT \big) \text{ is connected} \Big\} \prod_{i=1}^{k+3}f(x_i) d\bx \\
&+ \sum_{\ell=1}^{k+2} (\seq)^{-1} n^{2k+5-\ell} \int_{(\bbr^d)^{2k+5-\ell}} \supt h_{r_nt} (x_1,\dots,x_{k+2}) \\
&\qquad \qquad \qquad \qquad  \times \one \Big\{ \C\big(\{ x_1,\dots,x_\ell,x_{k+3}, \dots, x_{2k+5-\ell} \}, r_nT\big) \text{ is connected}  \Big\} \prod_{i=1}^{2k+5-\ell}f(x_i) d\bx \\
&=: K_n + L_n.
\end{align*}
By the customary change of variables as before, we have as $n\to\infty$
$$
K_n \le \|f\|_\infty^{2k+3} n^{k+3} r_n^{d(k+2)} \int_{(\bbr^d)^{k+1}} \supt h_t (0,\by)d\by \int_{(\bbr^d)^{k+2}} \one \Big\{ \C\big( \{ 0,\by \}, T\big) \text{ is connected} \Big\} d\by \to 0,
$$
and
\begin{align*}
L_n &\le \sum_{\ell=1}^{k+2}\| f\|_\infty^{2k+4-\ell} (nr_n^d)^{k+3-\ell} \int_{(\bbr^d)^{2k+4-\ell}} \supt h_t (0,y_1,\dots,y_{k+1}) \\
&\qquad \qquad \qquad \qquad \qquad \times \one \Big\{ \C \big( \{ 0,y_1,\dots,y_{\ell-1}, y_{k+2}, \dots,y_{2k+4-\ell} \}, T \big) \text{ is connected} \Big\}d\by\to 0.
\end{align*}
Thus, it follows that $J_n \to 0$ as $n\to\infty$.

From all of the arguments thus far, we conclude that, as $n\to\infty$,
\begin{align*}
&\eta_n (F_{H_1,H_2,\epsilon_1,\epsilon_2}) = I_n +o(1) \\
&= (\seq)^{-1} \binom{n}{k+2} \int_{(\bbr^d)^{k+2}} \prod_{\ell=1}^2 \bigg( 1-\exp \Big\{  -\Big(H_\ell \big( b( r_n^{-1} \bx), d( r_n^{-1} \bx ) \big) -\epsilon_\ell \Big)_+\Big\}  \bigg) \\
&\qquad \qquad \qquad \qquad \qquad \qquad \qquad\qquad \qquad  \qquad \qquad \qquad\qquad \qquad \times \prod_{i=1}^{k+2} f(x_i) d\bx +o(1) \\
&=(\seq)^{-1} \binom{n}{k+2}  r_n^{d(k+1)} \int_{\bbr^d} \int_{(\bbr^d)^{k+1}} \prod_{\ell=1}^2 \bigg( 1-\exp \Big\{  -\Big(H_\ell \big( b( 0,\by), d(0,\by) \big) -\epsilon_\ell \Big)_+\Big\}  \bigg) \\
&\qquad \qquad \qquad \qquad \qquad \qquad\qquad \qquad \qquad \qquad \qquad\qquad \times f(x)\prod_{i=1}^{k+1}f(x+r_ny_i) d\by dx + o(1) \\
&\to C_k \int_{(\bbr^d)^{k+1}} \prod_{\ell=1}^2 \bigg( 1-\exp \Big\{ -\Big( H_\ell \big( b( 0,\by), d(0,\by) \big) -\epsilon_\ell\Big)_+ \Big\}  \bigg) d\by = \eta (F_{H_1,H_2,\epsilon_1,\epsilon_2}),
\end{align*}
where the last convergence is due to the continuity of $f$ and the dominated convergence theorem.

\section{Appendix}
In this section, the technical results necessary for the proof of Theorem \ref{t:divergence.regime} are collected. As in Section \ref{sec:proof.divergence.regime}, $C>0$ denotes a generic positive constant, independent of $n$.

One of the key ingredients for our proof is the concentration bound for a Poisson $U$-statistics (\cite{bachmann:reitzner:2018}). Let us first rephrase the setup and assumptions given in \cite{bachmann:reitzner:2018} in a way suitable for the current study.
Let $\mathcal P$ be a Poisson point process on $\bbr^d$ with finite intensity measure and $s:(\bbr^d)^i\to \{0,1\}$ be a symmetric indicator function that satisfies the following. \\
$(i)$ There exists $c_1>0$ such that
\begin{equation}  \label{e:cond.s1}
s(x_1,\dots,x_i) = 1 \ \ \text{whenever diam}(x_1,\dots,x_i) < c_1, 
\end{equation}
where diam$(x_1,\dots,x_i)=\max_{1 \le p, q \le i}\| x_p-x_q \|$. \\
$(ii)$ There exists $c_2 > c_1$ such that
\begin{equation}  \label{e:cond.s2}
s(x_1,\dots,x_i) = 0 \ \ \text{whenever diam}(x_1,\dots,x_i) > c_2.
\end{equation}
We then define a \emph{Poisson $U$-statistics} or order $i$ by
\begin{equation}  \label{e:Poisson.U.stats}
F(\mathcal P) := \sum_{\Y\subset \mathcal P, \, |\Y|=i} s(\Y).
\end{equation}

\begin{proposition} [Theorem 3.1 in \cite{bachmann:reitzner:2018}]  \label{p:concentration}
There exists a constant $C>0$, depending only on $i$, $d$, $c_1$, and $c_2$, such that, for all $r>0$,
\begin{equation}  \label{e:concentration1}
\P\Big( F(\mathcal P)\ge \E\big[F(\mathcal P)\big] +r\Big) \le \exp \Big\{ -C \Big[ \big( \E[F(\mathcal P)] +r \big)^{1/(2i)} - \big( \E[F(\mathcal P)] \big)^{1/(2i)}  \Big]^2 \Big\}
\end{equation}
and
\begin{equation}  \label{e:concentration2}
\P\Big( F(\mathcal P)\le \E\big[F(\mathcal P)\big] -r\Big) \le \exp \Big\{ -\frac{Cr^2}{\text{Var}\big(F(\mathcal P) \big)} \Big\}.
\end{equation}
\end{proposition}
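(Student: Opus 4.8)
The plan is to prove both tails through the difference-operator (Malliavin) calculus on Poisson space, exploiting the fact that $F$ is a polynomial of degree $i$ in the underlying point process. For $x\in\bbr^d$ introduce the add-one-point difference operator $D_x F(\mathcal P) := F(\mathcal P\cup\{x\}) - F(\mathcal P)$. Since $F$ in \eqref{e:Poisson.U.stats} is a $U$-statistic of order $i$, one computes directly that $D_x F(\mathcal P) = \sum_{\Y\subset\mathcal P,\, |\Y|=i-1} s(\Y\cup\{x\})$, which is again a $U$-statistic, now of order $i-1$, with kernel $s(\cdot,x)$. Iterating, the $j$-fold difference $D^{(j)}_{x_1,\dots,x_j}F$ is a $U$-statistic of order $i-j$; in particular the top-order difference satisfies $D^{(i)}_{x_1,\dots,x_i}F = s(x_1,\dots,x_i)\in\{0,1\}$ and all differences of order exceeding $i$ vanish. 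This finite-chaos structure is the backbone of the argument.

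First I would record the localization afforded by the kernel hypotheses. By \eqref{e:cond.s2}, $s(x_1,\dots,x_i)=0$ whenever $\mathrm{diam}(x_1,\dots,x_i)>c_2$, so $D_xF$ counts only those $(i-1)$-subsets of $\mathcal P$ contained in the ball $B(x,c_2)$, and the same locality propagates to every iterated difference. Together with the finiteness of the intensity measure, this yields moment control of $D_xF$ and of the higher differences in terms of local Poisson point counts, which are the quantities that feed the concentration machinery. Condition \eqref{e:cond.s1} plays the complementary role of bounding the kernel below on small scales, ensuring that the fluctuations of $F$ are governed by $\mathrm{Var}(F)$ rather than by degenerate effects.

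For the upper tail \eqref{e:concentration1} I would run the Herbst argument on $L(\lambda):=\log\E\big[e^{\lambda(F-\E F)}\big]$, bounding $\mathrm{Ent}(e^{\lambda F})$ via the modified logarithmic Sobolev inequality for Poisson functionals, which controls it by an expression of the form $\E\int_{\bbr^d}\phi\big(\lambda D_xF\big)\,e^{\lambda F}\,\mu(dx)$ for an explicit convex $\phi$ vanishing to second order at the origin. The order-reduction structure, together with the boundedness of the top-order difference, turns this into a differential inequality for $L(\lambda)$ whose solution, after optimizing over $\lambda>0$, produces precisely the stretched-exponential rate $\big[(\E F+r)^{1/(2i)}-(\E F)^{1/(2i)}\big]^2$; the exponent $1/(2i)$ is the signature of the degree-$i$ polynomial dependence. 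The lower tail \eqref{e:concentration2} is softer: since $F\ge0$ cannot undershoot too far and the relevant chaos contributions are dominated by the first-order term, the same entropy method (or a direct second-order Poincar\'e estimate) gives a purely Gaussian bound with variance proxy $\mathrm{Var}(F)$, with no stretched-exponential correction.

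The main obstacle, and the reason the elementary bounded-differences (Azuma/McDiarmid) route fails, is that $D_xF$ is \emph{unbounded}: a cluster of many Poisson points near $x$ can make the add-one cost large. The remedy is exactly the $U$-statistic order reduction above---one does not bound $D_xF$ directly but propagates the analysis down through the $i$ layers of differences until reaching the genuinely bounded kernel $s$, bookkeeping the local point-count moments at each layer. Making this telescoping quantitative, so that the accumulated constants depend only on $i,d,c_1,c_2$ and the resulting exponent is the clean $1/(2i)$, is the delicate technical heart of the proof.
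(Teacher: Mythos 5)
The first thing to note is that the paper contains no proof of this proposition at all: it is imported verbatim as Theorem 3.1 of \cite{bachmann:reitzner:2018}, so your attempt can only be measured against the proof in that reference (which rests on the difference-operator and modified logarithmic Sobolev machinery for Poisson functionals developed by Bachmann and Peccati). At the level of tools you are in the right place: the add-one-point difference operator, the identity that $D^{(j)}F$ is a $U$-statistic of order $i-j$ with $D^{(i)}_{x_1,\dots,x_i}F=s(x_1,\dots,x_i)$ and vanishing higher differences, the localization of $D_xF$ to $(i-1)$-subsets of $\mathcal P\cap B(x,c_2)$ via \eqref{e:cond.s2}, and the entropy/Herbst route to \eqref{e:concentration1} are all genuine features of the actual proof.

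There is, however, a gap at the decisive step, and your proposed remedy for it would fail. The modified log-Sobolev inequality controls the entropy of $e^{\lambda F}$ through the \emph{first} difference $D_xF$ only; iterated differences never enter it, so ``propagating the analysis down through the $i$ layers of differences until reaching the bounded kernel'' has no way to interface with the entropy bound, and the Herbst differential inequality you would write down does not close. What closes it in \cite{bachmann:reitzner:2018} is a \emph{self-bounding} inequality, and its derivation is exactly where both kernel conditions are consumed. By \eqref{e:cond.s2}, $D_xF\le\binom{N}{i-1}$ with $N=|\mathcal P\cap B(x,c_2)|$; covering $B(x,c_2)$ by $K=K(d,c_1,c_2)$ balls of radius $c_1/4$, the pigeonhole principle places at least $N/K$ points of $\mathcal P$ in one ball of the cover, where by \eqref{e:cond.s1} every $i$-subset has $s=1$, whence $F\ge\binom{\lceil N/K\rceil}{i}$. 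Eliminating $N$ gives $D_xF\le C\bigl(1+F^{(i-1)/i}\bigr)$, equivalently $\int_{\bbr^d}(D_xF)^2\,\mu(dx)\le C'\bigl(1+F^{2-1/i}\bigr)$ with $\mu$ the intensity measure; this says that $F^{1/(2i)}$ has a bounded discrete gradient, and Gaussian concentration for $F^{1/(2i)}$ is precisely the bound $\exp\bigl\{-C[(\E F+r)^{1/(2i)}-(\E F)^{1/(2i)}]^2\bigr\}$. So the exponent $1/(2i)$ records the self-bounding exponent $(i-1)/i$, not merely ``degree-$i$ polynomial dependence,'' and the covering step is also where the claimed dependence of $C$ on $d,c_1,c_2$ enters. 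In particular your reading of \eqref{e:cond.s1} (a lower bound on the kernel ``ensuring fluctuations are governed by $\mathrm{Var}(F)$'') misidentifies its role: it is an input to the \emph{upper} tail, converting local point counts into powers of $F$. Finally, your lower-tail argument is not a proof either: ``the chaos contributions are dominated by the first-order term'' is unsubstantiated, and a second-order Poincar\'e inequality yields normal approximation with Berry--Esseen-type error, not exponential bounds. The bound \eqref{e:concentration2} instead follows from the monotonicity $D_xF\ge0$ of a $U$-statistic with nonnegative kernel combined with an entropy-based concentration inequality for increasing Poisson functionals, which is where the variance proxy legitimately appears.
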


For the next lemma, recall the sequences $(p_m)$ and $(q_m)$ in \eqref{e:def.pm.qm}, and $v_m=\lfloor e^{m^\gamma} \rfloor$ for some $\gamma\in (0,1)$.

\begin{lemma} \label{l:RV.seq}
Let $u_m$ and $w_m$ be any of the sequences $p_m$, $q_m$, $v_m$, and $v_{m+1}$. Assume that $(r_n)$ is a regularly varying sequence with exponent $\rho<0$. Then,
$$
\frac{u_m}{w_m} \to 1, \ \  \ m\to\infty,
$$
and
$$
\frac{r_{u_m}}{r_{w_m}} \to 1, \ \  \ m\to\infty.
$$
\end{lemma}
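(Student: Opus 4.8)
The plan is to reduce everything to the single elementary estimate $v_{m+1}/v_m\to 1$ together with the uniform convergence theorem for regularly varying sequences. First I would record that, since $p_m$ and $q_m$ both lie in $\{v_m, v_m+1,\dots,v_{m+1}\}$, every ratio $u_m/w_m$ built from $p_m,q_m,v_m,v_{m+1}$ is squeezed between $v_m/v_{m+1}$ and $v_{m+1}/v_m$. Hence the first assertion follows at once from $v_{m+1}/v_m\to1$, which is immediate from $v_m=\lfloor e^{m^\gamma}\rfloor$: by the mean value theorem $(m+1)^\gamma-m^\gamma=\gamma\xi^{\gamma-1}$ for some $\xi\in(m,m+1)$, and since $\gamma-1<0$ this tends to $0$, so $e^{(m+1)^\gamma}/e^{m^\gamma}\to1$; the two floor operations alter each term by at most one and are absorbed in the limit.

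For the second assertion the only real work is to prove
$$
\sup_{v_m\le \ell\le v_{m+1}}\Big|\frac{r_\ell}{r_{v_m}}-1\Big|\to 0,\qquad m\to\infty,
$$
since then each of $r_{p_m}/r_{v_m}$, $r_{q_m}/r_{v_m}$ and $r_{v_{m+1}}/r_{v_m}$ tends to $1$ (the indices $p_m,q_m,v_{m+1}$ all lie in $[v_m,v_{m+1}]$), and every ratio $r_{u_m}/r_{w_m}=(r_{u_m}/r_{v_m})(r_{v_m}/r_{w_m})$ then converges to $1$. To obtain the displayed supremum I would invoke the standard uniform convergence theorem for regularly varying sequences: the convergence $r_{\lfloor\lambda n\rfloor}/r_n\to\lambda^\rho$ holds uniformly for $\lambda$ in compact subsets of $(0,\infty)$. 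Writing an arbitrary integer $\ell\in[v_m,v_{m+1}]$ as $\ell=\lfloor\lambda v_m\rfloor$ with $\lambda=\ell/v_m\in[1,v_{m+1}/v_m]$, and using that $[1,v_{m+1}/v_m]\subset[1,2]$ for all large $m$, I bound
$$
\Big|\frac{r_\ell}{r_{v_m}}-1\Big|\le \sup_{1\le\lambda\le v_{m+1}/v_m}\Big|\frac{r_{\lfloor\lambda v_m\rfloor}}{r_{v_m}}-\lambda^\rho\Big|\;+\;\sup_{1\le\lambda\le v_{m+1}/v_m}\big|\lambda^\rho-1\big|.
$$
The first supremum vanishes by uniform convergence (it is taken over a $\lambda$-set contained in a fixed compact), and the second equals $1-(v_{m+1}/v_m)^\rho\to0$ because $v_{m+1}/v_m\to1$ and $\rho<0$; both bounds are independent of $\ell$, which yields the supremum estimate.

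The main obstacle is precisely that $(r_n)$ is not assumed monotone, so one cannot compare $r_{p_m}$ and $r_{q_m}$ with $r_{v_m}$ merely by evaluating at the endpoints of the block $[v_m,v_{m+1}]$; the argmax and argmin may sit anywhere inside the block. This is exactly what forces the use of the uniform, rather than pointwise, convergence theorem, which controls $r_\ell/r_{v_m}$ simultaneously over the whole shrinking block. The remaining care is bookkeeping: noting that $\{\lfloor\lambda v_m\rfloor:1\le\lambda\le v_{m+1}/v_m\}$ exhausts the integers of $[v_m,v_{m+1}]$ (since $\lambda\mapsto\lambda v_m$ sweeps $[v_m,v_{m+1}]$ continuously), and that the compact $\lambda$-range may be fixed once and for all, e.g.\ as $[1,2]$, for all sufficiently large $m$.
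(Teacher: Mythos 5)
Your proof is correct and takes essentially the same route as the paper's: the squeeze $v_m/v_{m+1}\le u_m/w_m\le v_{m+1}/v_m$ together with $v_{m+1}/v_m\to1$ for the first claim, and the uniform convergence theorem for regularly varying sequences over a compact $\lambda$-range shrinking to $1$ for the second. The only difference is bookkeeping: you anchor the uniform estimate at $v_m$ and factor $r_{u_m}/r_{w_m}=(r_{u_m}/r_{v_m})\,(r_{v_m}/r_{w_m})$, whereas the paper anchors directly at $w_m$, writing $u_m=\lfloor w_m(u_m/w_m)\rfloor$ and taking the supremum over $a\in[\tfrac{1}{2},\tfrac{3}{2}]$ -- both are equally valid.
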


\begin{proof}
By the definition of these sequences, it is clear that
$$
\frac{v_m}{v_{m+1}} \le \frac{u_m}{w_m} \le \frac{v_{m+1}}{v_m}.
$$
We see that
$$
1 \le \frac{v_{m+1}}{v_m} \le \frac{e^{(m+1)^\gamma-m^\gamma}}{1-e^{-m^\gamma}} = \frac{e^{m^{\gamma-1} (\gamma + o(1))}}{1-e^{-m^\gamma}}, \ \ \ m\to\infty.
$$
As $0<\gamma<1$, the rightmost term goes to $1$ as $m\to\infty$.

For the proof of the second statement, we write
$$
\frac{r_{u_m}}{r_{w_m}} = \frac{r_{\lfloor w_m \frac{u_m}{w_m} \rfloor}}{r_{w_m}} - \Big( \frac{u_m}{w_m} \Big)^\rho  + \Big( \frac{u_m}{w_m} \Big)^\rho.
$$
As shown above, $u_m/w_m\to 1$ as $m\to\infty$, and therefore, $u_m/w_m\in [\frac{1}{2}, \frac{3}{2}]$ for sufficiently large $m$. Hence, as $m\to\infty$,
$$
\Big|  \, \frac{r_{\lfloor w_m \frac{u_m}{w_m} \rfloor}}{r_{w_m}} - \Big( \frac{u_m}{w_m} \Big)^\rho  \, \Big| \le \sup_{\frac{1}{2}\le a \le \frac{3}{2}} \Big|  \, \frac{r_{\lfloor w_m a \rfloor}}{r_{w_m}} - a^\rho  \, \Big| \to 0,
$$
by the uniform convergence of regularly varying sequences (see Proposition 2.4 in \cite{resnick:2007}).
\end{proof}

The following lemma gives the asymptotic moments of various quantities used for the proof of Theorem \ref{t:divergence.regime}. Recall the notations \eqref{e:def.Akst}, \eqref{e:def.Tm.uparrow}, \eqref{e:def.Tm.downarrow}, and \eqref{e:def.Vm}.

\begin{lemma} \label{l:moments}
Under the assumptions of Theorem \ref{t:divergence.regime}, we have that, as $m\to\infty$,
\begin{equation}  \label{e:moment.Tm.uparrow}
\E \big[ T_m^\uparrow(s,t) \big] \sim A_k(s,t) v_m^{k+2}r_{q_m}^{d(k+1)},
\end{equation}
\begin{equation}  \label{e:moment.Tm.downarrow}
\E \big[ T_m^\downarrow(s,t) \big] \sim A_k(s,t) v_{m+1}^{k+2}r_{p_m}^{d(k+1)},
\end{equation}
\begin{equation}  \label{e:2nd.moment.Tm.downarrow}
\text{Var}\big( T_m^\downarrow(s,t) \big) \sim A_k(s,t) v_{m+1}^{k+2}r_{p_m}^{d(k+1)},
\end{equation}
\begin{equation}  \label{e:moment.Vm}
\E \big[ V_m(t) \big] \sim \frac{v_{m}^{k+3}r_{q_m}^{d(k+2)}}{(k+3)!}\, \int_{\bbr^d}f(x)^{k+3}dx \int_{(\bbr^d)^{k+2}} \one \big\{ \C\big( \{0,\by\}, t \big) \text{ is connected} \big\}d\by, 
\end{equation}
where $\by=(y_1,\dots,y_{k+2})\in (\bbr^d)^{k+2}$. 
\end{lemma}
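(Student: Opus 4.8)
The plan is to treat all four quantities as Poisson $U$-statistics and to extract their leading-order asymptotics through the (multivariate) Mecke equation, the change of variables $x_i = x + r y_{i-1}$ that underlies every computation in this paper, and the continuity of $f$. Throughout, the limiting integrals are finite because the kernels are localized: both $h_{r}(\Y)$-type products and $\one\{\C(\Y,r)\text{ is connected}\}$ vanish once $\mathrm{diam}(\Y) > c_2 r$ for a suitable $c_2$ (this is precisely \eqref{e:cond.s2}). A recurring final step is that each expectation or variance first appears in terms of $(v_{m+1},r_{p_m})$ or $(v_m,r_{q_m})$, after which Lemma \ref{l:RV.seq} is invoked to pass from one pair to the other: since $v_m/v_{m+1}\to1$ and $r_{p_m}/r_{q_m}\to1$, we get $v_{m+1}^{k+2}r_{p_m}^{d(k+1)}\sim v_m^{k+2}r_{q_m}^{d(k+1)}$, and similarly for the order-$(k+3)$ scalings.

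For the expectation \eqref{e:moment.Tm.uparrow}, the Mecke equation gives
$$\E\big[T_m^\uparrow(s,t)\big] = \frac{v_{m+1}^{k+2}}{(k+2)!}\int_{(\bbr^d)^{k+2}} h_{r_{p_m}s}(\bx)h_{r_{p_m}t}(\bx)\prod_{i=1}^{k+2}f(x_i)\,d\bx.$$
Setting $x_i = x + r_{p_m}y_{i-1}$ (with $y_0\equiv0$) and using translation and scale invariance of the indicators produces the factor $r_{p_m}^{d(k+1)}$ together with the integrand $h_s(0,\by)h_t(0,\by)f(x)\prod_{i=1}^{k+1}f(x+r_{p_m}y_i)$. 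As $f$ is continuous and bounded, dominated convergence gives $\prod_{i=1}^{k+1}f(x+r_{p_m}y_i)\to f(x)^{k+1}$, so the $x$-integral converges to $\int f^{k+2}$ and the whole expression is asymptotic to $A_k(s,t)\,v_{m+1}^{k+2}r_{p_m}^{d(k+1)}$; Lemma \ref{l:RV.seq} then yields \eqref{e:moment.Tm.uparrow}. Identity \eqref{e:moment.Tm.downarrow} is the same computation with $(v_m,r_{q_m})$ in place of $(v_{m+1},r_{p_m})$, and \eqref{e:moment.Vm} is the analogous $(v_{m+1},r_{p_m})$-computation for the order-$(k+3)$ connectivity kernel, which now carries $k+2$ difference variables and hence the factor $r_{p_m}^{d(k+2)}$ and the exponent $\int f^{k+3}$, matching the stated form after Lemma \ref{l:RV.seq}.

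The only genuinely new point is the variance \eqref{e:2nd.moment.Tm.downarrow}. Writing $F = T_m^\downarrow(s,t) = \sum_{\Y\subset\mathcal P_{v_m},\,|\Y|=k+2}s(\Y)$ with $s = h_{r_{q_m}s}h_{r_{q_m}t}$, I would expand $\E[F^2] = \sum_{\ell=0}^{k+2}\E\big[\sum_{|\Y\cap\Y'|=\ell}s(\Y)s(\Y')\big]$ and evaluate each term by the Mecke equation. The disjoint term $\ell=0$ equals $(\E[F])^2$ exactly and cancels, while the full-overlap term $\ell=k+2$ (i.e.\ $\Y=\Y'$) equals $\E[\sum_\Y s(\Y)^2]=\E[F]$ because $s$ is an indicator. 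Hence $\mathrm{Var}(F) = \E[F] + \sum_{\ell=1}^{k+1}(\text{overlap-}\ell\text{ terms})$. For $1\le\ell\le k+1$, a nonzero contribution forces the $2k+4-\ell$ points of $\Y\cup\Y'$ into a single region of diameter $O(r_{q_m})$ (the two localized clusters share at least one point), so after the usual rescaling the term is of order $v_m^{2k+4-\ell}r_{q_m}^{d(2k+3-\ell)} = (v_m r_{q_m}^d)^{\,k+2-\ell}\,\E[F]$. Since $v_m r_{q_m}^d \sim n r_n^d\to0$, every such term is $o(\E[F])$, leaving $\mathrm{Var}(F)\sim\E[F]\sim A_k(s,t)\,v_{m+1}^{k+2}r_{p_m}^{d(k+1)}$ after a last application of Lemma \ref{l:RV.seq}.

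The main obstacle is precisely this variance estimate: one must verify that each overlap-$\ell$ integral carries the claimed power $r_{q_m}^{d(2k+3-\ell)}$, which requires the localization \eqref{e:cond.s2} to confine the shared cluster and careful bookkeeping of which difference variables remain free after rescaling. Once that scaling is pinned down, the sparsity $nr_n^d\to0$ does the rest, and the three expectation identities reduce to routine changes of variables combined with dominated convergence.
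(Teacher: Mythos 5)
Your proposal is correct, and for the three expectation asymptotics it follows the paper's proof essentially verbatim: the paper also applies Palm theory for Poisson processes (your Mecke equation) to write $\E[T_m^\uparrow(s,t)]$ as $\frac{v_{m+1}^{k+2}}{(k+2)!}\int h_{r_{p_m}s}h_{r_{p_m}t}\prod_i f(x_i)\,d\bx$, performs the change of variables $x_i=x+r_{p_m}y_{i-1}$, uses continuity of $f$ with dominated convergence, and then invokes Lemma \ref{l:RV.seq} to pass between the pairs $(v_{m+1},r_{p_m})$ and $(v_m,r_{q_m})$; it likewise proves only \eqref{e:moment.Tm.uparrow} explicitly and declares \eqref{e:moment.Tm.downarrow} and \eqref{e:moment.Vm} analogous. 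The one place where you go beyond the paper is the variance \eqref{e:2nd.moment.Tm.downarrow}: the paper omits this proof entirely, remarking that in the sparse regime the variance of a Poisson $U$-statistic grows like its expectation and pointing the reader to \cite{kahle:meckes:2013} and Proposition 6.2 of \cite{owada:thomas:2020}. Your overlap decomposition supplies exactly the missing argument, and it is sound: by the multivariate Mecke equation the $\ell=0$ (disjoint) term factorizes exactly into $(\E[F])^2$ and cancels, the $\ell=k+2$ diagonal term equals $\E[F]$ since the kernel is an indicator, and each intermediate overlap term is of order $v_m^{2k+4-\ell}r_{q_m}^{d(2k+3-\ell)}=(v_mr_{q_m}^d)^{k+2-\ell}\,v_m^{k+2}r_{q_m}^{d(k+1)}=o(\E[F])$ because the shared point localizes all $2k+4-\ell$ points in a ball of radius $O(r_{q_m})$ and $v_mr_{q_m}^d\le v_mr_{v_m}^d\to0$ in the sparse regime. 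So your write-up is self-contained where the paper defers to references, at the cost of the bookkeeping you correctly identify as the main burden; the only caveat worth recording is that the asymptotic equivalences are meaningful only when $A_k(s,t)>0$, an implicit nondegeneracy assumption shared by the paper.
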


\begin{proof}
The proofs of \eqref{e:moment.Tm.uparrow}, \eqref{e:moment.Tm.downarrow}, and \eqref{e:moment.Vm} are very similar, and therefore, we prove the first one only.
By the Palm theory for Poisson point processes (see, e.g., Section 1.7 in \cite{penrose:2003}),
$$
\E \big[ T_m^{\uparrow}(s,t) \big] = \frac{v_{m+1}^{k+2}}{(k+2)!}\, \int_{(\bbr^d)^{k+2}} h_{r_{p_m}s}(x_1,\dots,x_{k+2})h_{r_{p_m}t}(x_1,\dots,x_{k+2})\prod_{i=1}^{k+2} f(x_i)d\bx.
$$
Making the change of variables, $x_i=x+r_{p_m}y_{i-1}$, $i=1,\dots,k+2$ (with $y_0\equiv 0$),
\begin{align*}
\E \big[ T_m^{\uparrow}(s,t) \big] = \frac{v_{m+1}^{k+2}r_{p_m}^{d(k+1)}}{(k+2)!}\, \int_{\bbr^d}\int_{(\bbr^d)^{k+1}} h_s(0,\by)h_t(0,\by) f(x)\prod_{i=1}^{k+1}f(x+r_{p_m}y_i)d\by dx, 
\end{align*}
where $\by=(y_1,\dots,y_{k+1})\in (\bbr^d)^{k+1}$. 
As $f$ is continuous, we have $\prod_{i=1}^{k+1}f(x+r_{p_m}y_i) \to f(x)^{k+1}$ as $m\to\infty$ a.e.
By the dominated convergence theorem and Lemma \ref{l:RV.seq}, we conclude that
$$
\E \big[ T_m^{\uparrow}(s,t) \big]  \sim A_k(s,t) v_{m+1}^{k+2}r_{p_m}^{d(k+1)} \sim A_k(s,t) v_{m}^{k+2}r_{q_m}^{d(k+1)}, \ \ \text{as } m\to\infty.
$$

Finally, in the sparse regime, the variance of Poisson $U$-statistics as in \eqref{e:Poisson.U.stats} is known to exhibit the same growth rate as its expectation. Although we omit a formal proof of \eqref{e:2nd.moment.Tm.downarrow}, those wishing for more detailed arguments may refer to \cite{kahle:meckes:2013} and Proposition 6.2 in \cite{owada:thomas:2020}.
\end{proof}

The objective of the final lemma below is to justify that, under the appropriate scaling, the difference between the Poisson $U$-statistics and the $U$-statistics induced by the corresponding binomial process, asymptotically vanishes.

\begin{lemma}  \label{l:diff.Poisson.binomial}
Under the assumptions of Theorem \ref{t:divergence.regime}, we have that, as $m\to\infty$,
\begin{equation}  \label{e:first.diff.Poisson.binomial}
\big(v_m^{k+2}r_{q_m}^{d(k+1)}\big)^{-1} \Big[  \sum_{\Y\subset \X_{v_{m+1}}, \, |\Y|=k+2} h_{r_{p_m}s}(\Y)h_{r_{p_m}t}(\Y) - T_m^{\uparrow}(s,t) \Big] \to 0, \ \ \text{a.s.},
\end{equation}
$$
\big(v_{m+1}^{k+2}r_{p_m}^{d(k+1)}\big)^{-1} \Big[  \sum_{\Y\subset \X_{v_{m}}, \, |\Y|=k+2} h_{r_{q_m}s}(\Y)h_{r_{q_m}t}(\Y) - T_m^{\downarrow}(s,t) \Big] \to 0, \ \ \text{a.s.},
$$
$$
\big( v_m^{k+2} r_{q_m}^{d(k+1)} \big)^{-1} \Big[\sum_{\substack{\Y \subset \X_{v_{m+1}}, \\ |\Y|=k+3}} \one \big\{ \C(\Y, r_{p_m}t) \text{ is connected} \big\} -V_{m}(t) \Big] \to 0, \ \ \text{a.s.}
$$
\end{lemma}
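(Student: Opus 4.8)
\emph{Proof proposal.} The plan is to prove all three displays by a single coupling-and-first-moment argument, so I focus on \eqref{e:first.diff.Poisson.binomial}. Recall from the construction of $\mathcal P_n$ that $\mathcal P_{v_{m+1}}=\{X_1,\dots,X_N\}$ with $N\sim\mathrm{Poisson}(v_{m+1})$ independent of $(X_i)$, while $\X_{v_{m+1}}=\{X_1,\dots,X_{v_{m+1}}\}$. Hence the two point sets share their first $\min(N,v_{m+1})$ points, and their symmetric difference consists of the $|N-v_{m+1}|$ points whose indices lie between $\min(N,v_{m+1})$ and $\max(N,v_{m+1})$. Writing $\phi_m(\Y)=h_{r_{p_m}s}(\Y)h_{r_{p_m}t}(\Y)$, both $U$-statistics are monotone under adding points, so the signed difference collapses to a sum over those $(k+2)$-subsets of the larger set that meet the symmetric difference; thus
\[
\Big| \sum_{\Y\subset \X_{v_{m+1}},\,|\Y|=k+2} \phi_m(\Y) - T_m^\uparrow(s,t)\Big| \le \Delta_m := \#\big\{\Y:|\Y|=k+2,\ \phi_m(\Y)=1,\ \Y\cap(\text{symmetric difference})\neq\emptyset\big\}.
\]
The number of $(k+2)$-subsets meeting the symmetric difference is exactly $\binom{\max(N,v_{m+1})}{k+2}-\binom{\min(N,v_{m+1})}{k+2}=\big|\binom{N}{k+2}-\binom{v_{m+1}}{k+2}\big|$.

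First I would compute $\E[\Delta_m\mid N]$. Since $N$ is independent of the exchangeable sample $(X_i)$, conditionally on $N$ every $(k+2)$-subset has the same probability $\theta_m:=\E[\phi_m(X_1,\dots,X_{k+2})]$ of satisfying $\phi_m=1$, so $\E[\Delta_m\mid N]=\big|\binom{N}{k+2}-\binom{v_{m+1}}{k+2}\big|\,\theta_m$. A change of variables exactly as in Lemma \ref{l:moments} together with $\|f\|_\infty<\infty$ gives $\theta_m\le C r_{p_m}^{d(k+1)}$. Using the mean-value bound $\big|\binom{N}{k+2}-\binom{v_{m+1}}{k+2}\big|\le C\,(N\vee v_{m+1})^{k+1}|N-v_{m+1}|$, the Cauchy--Schwarz inequality, and the Poisson moment estimates $\E[(N-v_{m+1})^2]=v_{m+1}$ and $\E[(N\vee v_{m+1})^{2(k+1)}]\le C v_{m+1}^{2(k+1)}$, I obtain
\[
\E[\Delta_m]\le C\,v_{m+1}^{\,k+1}\sqrt{v_{m+1}}\;\theta_m \le C\, v_{m+1}^{\,k+3/2}\, r_{p_m}^{d(k+1)}.
\]

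Next I would rescale. By Lemma \ref{l:RV.seq} the normalizer satisfies $v_m^{k+2}r_{q_m}^{d(k+1)}\sim v_{m+1}^{k+2}r_{p_m}^{d(k+1)}$, so $\E[\Delta_m]/(v_m^{k+2}r_{q_m}^{d(k+1)})\le C\,v_{m+1}^{-1/2}$. Because $v_{m+1}=\lfloor e^{(m+1)^\gamma}\rfloor$ grows super-polynomially, $\sum_m v_{m+1}^{-1/2}<\infty$; hence by Markov's inequality $\sum_m\P\big(\Delta_m>\epsilon\,v_m^{k+2}r_{q_m}^{d(k+1)}\big)<\infty$ for every $\epsilon>0$, and the Borel--Cantelli lemma yields \eqref{e:first.diff.Poisson.binomial}. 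The second display is identical after interchanging the roles of $(v_{m+1},p_m)$ and $(v_m,q_m)$. For the third display I would repeat the computation with $(k+2)$-subsets replaced by $(k+3)$-subsets and $\phi_m$ replaced by the connectivity indicator, whose single-subset probability is of order $r_{p_m}^{d(k+2)}$; the scaled first moment is then of order $v_{m+1}^{-1/2}\,(v_{m+1}r_{p_m}^d)$, which is even smaller since $v_{m+1}r_{p_m}^d\to 0$ (by $nr_n^d\to0$ and Lemma \ref{l:RV.seq}), and again summable.

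The main obstacle to flag is the following tension: under \eqref{e:faster.than.log} the normalizer grows only slowly (like $m^{\gamma\eta}$), which is precisely why the concentration inequality of Proposition \ref{p:concentration} was needed to handle $T_m^\uparrow$, $T_m^\downarrow$, and $V_m$ themselves. What makes the present lemma lighter is that the \emph{difference} is smaller than the normalizer by a full factor of order $v_{m+1}^{-1/2}$ (essentially $n^{-1/2}$), and the subsequence $v_m=\lfloor e^{m^\gamma}\rfloor$ was chosen to grow fast enough that $\sum_m v_{m+1}^{-1/2}$ converges; consequently a bare Markov bound closes the argument and no concentration estimate is required here. The only genuinely delicate bookkeeping is the exchangeability identity for $\E[\Delta_m\mid N]$ and verifying the order of $\theta_m$ through the change of variables.
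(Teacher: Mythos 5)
Your proposal is correct and follows essentially the same route as the paper: a first-moment bound via Markov's inequality and Borel--Cantelli, the exchangeability identity $\E[\Delta_m\mid N]=\big|\binom{N}{k+2}-\binom{v_{m+1}}{k+2}\big|\theta_m$ obtained by conditioning on the Poisson count, the change-of-variables estimate $\theta_m=O\big(r_{p_m}^{d(k+1)}\big)$, and summability of $v_{m+1}^{-1/2}$ from the choice $v_m=\lfloor e^{m^\gamma}\rfloor$. The only (cosmetic) difference is that you bound $\E\big|\binom{N}{k+2}-\binom{v_{m+1}}{k+2}\big|\le C v_{m+1}^{k+3/2}$ via a mean-value estimate plus Cauchy--Schwarz on the Poisson variance, whereas the paper reaches the same order by computing the Poisson factorial moments exactly and observing the cancellation of the leading $v_{m+1}^{2k+4}$ term under Cauchy--Schwarz.
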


\begin{proof}
The proofs of these statements are essentially the same; therefore, we prove only \eqref{e:first.diff.Poisson.binomial}. By the Borel--Cantelli lemma, together with Markov's inequality, it suffices to demonstrate that
\begin{equation}  \label{e:BC.diff}
\sum_{m=1}^\infty \frac{1}{v_m^{k+2}r_{q_m}^{d(k+1)}} \E \Big[  \Big| \sum_{\Y\subset \X_{v_{m+1}}, \, |\Y|=k+2} h_{r_{p_m}s}(\Y)h_{r_{p_m}t}(\Y) - T_m^\uparrow(s,t) \Big| \Big]<\infty.
\end{equation}
Recall that $N_{v_{m+1}}=|\mathcal P_{v_{m+1}}|$ is Poisson distributed with mean $v_{m+1}$. By conditioning on the values of $N_{v_{m+1}}$, the expectation portion in \eqref{e:BC.diff} is equal to
\begin{align*}
&\sum_{\ell=0}^\infty \E  \Big[  \Big| \sum_{\Y\subset \X_{v_{m+1}}, \, |\Y|=k+2} h_{r_{p_m}s}(\Y)h_{r_{p_m}t}(\Y) - \sum_{\Y\subset \X_{\ell}, \, |\Y|=k+2} h_{r_{p_m}s}(\Y)h_{r_{p_m}t}(\Y) \Big| \Big] \P(N_{v_{m+1}}=\ell)  \\
&=\sum_{\ell=0}^\infty \Big| \binom{\ell}{k+2} -\binom{v_{m+1}}{k+2}\Big| \E \big[ h_{r_{p_m}s}(X_1,\dots,X_{k+2}) h_{r_{p_m}t}(X_1,\dots,X_{k+2}) \big] \P(N_{v_{m+1}}=\ell),
\end{align*}
where $X_1,\dots,X_{k+2}$ are iid random variables with density $f$.
By the same change of variables as in the proof of Lemma \ref{l:moments}, together with Lemma \ref{l:RV.seq}, it holds that
$$
\E \big[ h_{r_{p_m}s}(X_1,\dots,X_{k+2}) h_{r_{p_m}t}(X_1,\dots,X_{k+2}) \big] \sim Cr_{p_m}^{d(k+1)} \sim Cr_{q_m}^{d(k+1)}, \ \ \ m\to\infty.
$$
Now, the left hand side of \eqref{e:BC.diff} is bounded, up to multiplicative constants, by
\begin{align}
&\sum_{m=1}^\infty \frac{1}{v_{m+1}^{k+2}}\, \sum_{\ell=0}^\infty \Big| \binom{\ell}{k+2} -\binom{v_{m+1}}{k+2}  \Big| \,  \P\big( N_{v_{m+1}}=\ell \big) \label{e:CS.inequ} \\
&=\sum_{m=1}^\infty \frac{1}{v_{m+1}^{k+2}}\, \E \bigg[ \Big| \binom{N_{v_{m+1}}}{k+2} -\binom{v_{m+1}}{k+2}  \Big|  \bigg] \notag \\
&\le \sum_{m=1}^\infty \frac{1}{v_{m+1}^{k+2}}\, \bigg\{ \E \bigg[  \binom{N_{v_{m+1}}}{k+2}^2 \bigg] - 2\binom{v_{m+1}}{k+2} \E \bigg[  \binom{N_{v_{m+1}}}{k+2} \bigg] + \binom{v_{m+1}}{k+2}^2  \bigg\}^{1/2}, \notag
\end{align}
where the last line is due to the Cauchy--Schwarz inequality. It is then elementary to check that
$$
\E \bigg[  \binom{N_{v_{m+1}}}{k+2} \bigg] = \frac{v_{m+1}^{k+2}}{(k+2)!},  \ \ \ \ \  \E \bigg[  \binom{N_{v_{m+1}}}{k+2}^2 \bigg] = \frac{1}{\big( (k+2)! \big)^2}\, \sum_{j=1}^{2k+4} c_j v_{m+1}^j
$$
for some $c_j$'s with $c_{2k+4}=1$. Therefore, the last expression in \eqref{e:CS.inequ} is equal to
\begin{equation}  \label{e:before.final}
\sum_{m=1}^\infty \frac{1}{v_{m+1}^{k+2}}\, \bigg\{ \frac{1}{\big( (k+2)! \big)^2}\, \sum_{j=1}^{2k+3} c_j^\prime v_{m+1}^j \bigg\}^{1/2}
\end{equation}
for some $c_j^\prime$'s, $j=1,\dots,2k+3$ (note that $v_{m+1}^{2k+4}$ has been canceled here). Finally, \eqref{e:before.final} is further bounded by
$$
C\sum_{m=1}^\infty \frac{1}{v_{m+1}^{1/2}} \le C\sum_{m=1}^\infty e^{-m^\gamma/2} < \infty,
$$
and the proof of \eqref{e:BC.diff} is complete.
\end{proof}

\noindent \textbf{Acknowledgements}: 
The author is very grateful for 
useful comments received from two anonymous referees and an anonymous
Associate Editor. These comments helped the author to introduce a number of improvements
to the paper.


\end{document}